\documentclass{aims}
\usepackage{amsmath}
  \usepackage{paralist}
  \usepackage{graphics} 
  \usepackage{epsfig} 
\usepackage{graphicx}  \usepackage{epstopdf}
 \usepackage[colorlinks=true]{hyperref}
\hypersetup{urlcolor=blue, citecolor=red}

  \textheight=8.2 true in
   \textwidth=5.0 true in
    \topmargin 30pt
     \setcounter{page}{1}



\newtheorem{theorem}{Theorem}[section]

\newtheorem{lemma}[theorem]{Lemma}

\newtheorem{problem}{Problem}[section]
\theoremstyle{definition}

\newtheorem{remark}{Remark}

\newcommand{\eps}{\varepsilon}
\newcommand{\dps}{\displaystyle}

\newcommand{\fr}{\frac}
\newcommand{\pa}{\partial}

\title[Supersonic-sonic patch arising from the Frankl problem] 
      {On a supersonic-sonic patch arising from the Frankl problem in transonic flows}

\author[Yanbo Hu and Jiequan Li]{}

\subjclass{35Q31, 35Q35, 35L80, 76H05.}
 \keywords{Steady Euler equations, Frankl problem, supersonic-sonic solution, existence, characteristic
decomposition.}

 \email{yanbo.hu@hotmail.com}
 \email{li\_jiequan@iapcm.ac.cn}

\thanks{The first author is supported by the Zhejiang Provincial Natural Science Foundation (No. LY21A010017). The second author is supported by the Natural Science Foundation of China (Nos: 11771054, 91852207,12072042),  National Key Project(GJXM92579) and Foundation of LCP}

\thanks{$^*$ Corresponding author: Jiequan Li}

\begin{document}
\maketitle

\centerline{ Dedicated to  the celebration of the 80th birthday of Professor Shuxing Chen}

\

\centerline{\scshape Yanbo Hu}
\medskip
{\footnotesize
 \centerline{Department of Mathematics, Hangzhou Normal University}
   \centerline{ Hangzhou, 311121, PR China}
} 

\medskip

\centerline{\scshape Jiequan Li$^*$}
\medskip
{\footnotesize
 \centerline{ Laboratory of Computational Physics, Institute of Applied Physics}
   \centerline{and Computational Mathematics, Beijing, 100088, China; }
   \centerline{Center for Applied Physics and Technology, Peking University, 100871, China}
}

\bigskip

 \centerline{(Communicated by the associate editor name)}

\begin{abstract}
We construct a supersonic-sonic smooth patch solution for the two dimensional steady Euler equations in gas dynamics. This patch is extracted from the Frankl problem in the study of transonic flow with local supersonic bubble over an airfoil. Based on the methodology of characteristic decompositions, we establish the global existence and regularity of solutions in a partial hodograph coordinate system in terms of  angle variables.
The original problem is solved by transforming the solution in the partial hodograph plane back to that in the physical plane. Moreover, the uniform regularity of the solution and the regularity of an associated sonic curve are also verified.
\end{abstract}

\section{Introduction}\label{s1}

Supersonic bubbles are ubiquitous in transonic flow problems, which are a kind of the important phenomena in compressible fluid dynamics. In the famous book (Supersonic Flow and Shock Waves, 1948, Page 370, \cite{Courant}), Courant and Friedrichs described a supersonic bubble arising in a duct: Suppose the duct walls are flat except for a small inward bulge at some section. If the entrance Mach number is not much below the value one, the flow becomes supersonic in a finite region adjacent to the bulge and is again purely subsonic throughout the exit section.
Such supersonic bubbles also arise over the airfoil similarly to those near the nozzle throat shown in Fig. 1. For more examples of transonic flows, we refer the reader to the monograph of Kuz'min \cite{Kuzmin}.

\begin{figure}[htbp]
\begin{center}
\includegraphics[scale=0.5]{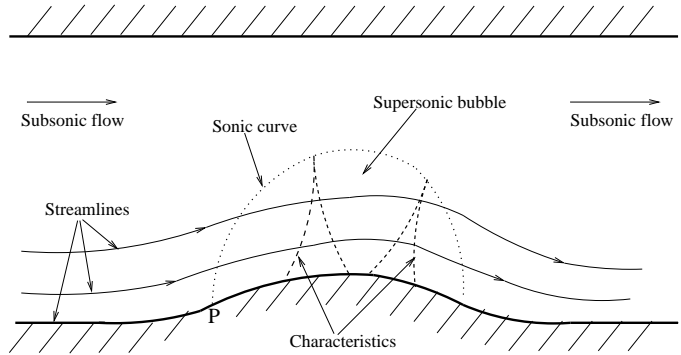}
\caption{\footnotesize Transonic phenomena in a duct.}
\end{center}
\end{figure}

The transonic flow problem described in Fig. 1 has been studied by many researchers, but the existence of its solutions is still open in the ``global'' transonic sense mathematically until now. See the review paper by Chen \cite{ChenS}. Some exact transonic solutions to the asymptotic equation of the potential equation, e.g., the Ringleb flow, were presented in \cite{Cole}. In a well-known paper \cite{Morawetz56}, Morawetz showed that the smooth flows for this transonic problems are unstable in general. Bers \cite{Bers1} studied the existence conditions of continuous solutions across the sonic line. The existence results of weak solutions were given in \cite{Morawetz85, Chen1} by using the method of compensated-compactness. For the fundamental solutions of the linear Tricomi and Keldysh operators, one may consult \cite{BG, ChenS2},  and also look up related results in \cite{ChenS1, ChenS3, ChenS5} etc.
In the past few years, many efforts have been made to investigate the existence of global subsonic-sonic solutions for the steady Euler equations. Xie and Xin \cite{Xie-Xin1} established
the existence of solutions in a subsonic-sonic part of the 2-D nozzle.
The global existence of subsonic-sonic flows for the full Euler equations was obtained by Chen {\em et al}  \cite{Chen4} in the compensated-compactness framework. Wang and Xin recently constructed a smooth transonic solution of Meyer type in Laval nozzles in \cite{Wang19}. On the other side, a local classical supersonic solution was established near a given sonic curve for the steady isentropic irrotational Euler equations by Zhang and Zheng \cite{ZhangT1}. In \cite{Hu-Li1}, Hu and Li verified the existence of sonic-supersonic classical solutions for the steady full Euler equations. Moreover, the existence of semi-hyperbolic patches of solutions to the isentropic irrotational Euler equations were provided in \cite{Lim, SWZ}.

For the transonic flow problem described as in Fig. 1, there may exist a transonic shock in the downstream flow by the classic result of Morawetz \cite{Morawetz56}. However, it is of importance in practice to construct shock-free transonic flows. In \cite{Frankl}, Frankl explored the transonic flow with supersonic bubbles over a symmetric airfoil and suggested that a smooth transonic flow may exist if a part of the airfoil is free of boundary conditions. As illustrated in Fig. 2, with the slip condition on the arcs $\widehat{PE}$ and $\widehat{FQ}$, the Frankl problem is formulated to find airfoil's arc $\widehat{EF}$ for the correctness of the problem in the class of smooth solutions. In \cite{Morawetz54}, Morawetz discussed the uniqueness of the Frankl problem for a second-order linear equation which is derived from the steady isentropic irrotational Euler system by the hodograph transformation.
A similar uniqueness result in the physical plane was given by Cook \cite{Cook}. Kuz'min proposed a modified Frankl problem in which a velocity distribution instead of the slip condition is prescribed on the arcs $\widehat{PE}$ and $\widehat{FQ}$ in \cite{Kuzmin}. From the physical viewpoint, this kind of problems describe the transonic flows past permeable (porous or perforated) boundaries. The solvability of a nonlinear perturbation problem, originated from the modified Frankl problem, was studied by Kuz'min \cite{Kuzmin0}, who also discussed the uniqueness of the modified Frankl problem to a linearized equation of the von Karman equation in a finite domain  \cite{Kuzmin1}.

\begin{figure}[htbp]
\begin{center}
\includegraphics[scale=0.55]{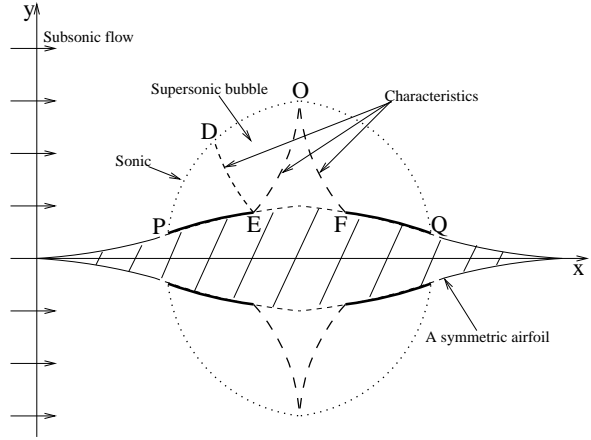}
\caption{\footnotesize The modified Frankl problem. With a velocity distribution on the arcs $\widehat{PE}$ and $\widehat{FQ}$, find  an airfoil's arc $\widehat{EF}$, free of boundary conditions,  for the correctness of the problem in the class of smooth solutions.}
\end{center}
\end{figure}

In this paper, we are interested in the existence of smooth solutions of the modified Frankl problem for the steady isentropic compressible Euler equations in two dimensions (2-D). Given a velocity distribution on the arc $\widehat{PE}$ such that $P$ is sonic, our problem is to find a sonic curve starting from $P$ and build a smooth supersonic solution in the region bounded by this sonic curve and $\widehat{PE}$. We hope to extend this solution up to the positive characteristic curve $\widehat{EO}$ by solving a free boundary value problem and then construct a global smooth supersonic flow by the symmetry of the airfoil in future work. This kind of patches, in fact, has more applications, e.g., the problem of transonic flow around a convex corner \cite{Kuzmin, Vaglio}.

We adopt the 2-D steady isentropic Euler equations,
\begin{align}\label{1.1}
\left\{
\begin{array}{l}
  (\rho u)_x+(\rho v)_y=0, \\
  (\rho u^2+p)_x+(\rho uv)_y=0, \\
   (\rho uv)_x+(\rho v^2+p)_y=0
\end{array}
\right.
\end{align}
where $\rho$ is the density, $(u,v)$ are the velocity, $p$ is the pressure satisfying the polytropic gas equation of state $p=A\rho^\gamma$ for some positive constant $A$, and $\gamma>1$ is the adiabatic index. For irrotational flows, that is, $u_y=v_x$, system \eqref{1.1} can be reduced to
\begin{align}\label{1.2}
\left\{
\begin{array}{l}
   (c^2-u^2)u_x-uv(u_y+v_x)+(c^2-v^2)v_y=0, \\
   u_y-v_x=0,
\end{array}
\right.
\end{align}
coupled with the Bernoulli law
\begin{align}\label{1.3}
  q^2+\fr{2c^2}{\gamma-1}=B_0.
\end{align}
Here $c=\sqrt{p'(\rho)}$ denotes the sound speed, $q=\sqrt{u^2+v^2}$ denotes the flow spend and $B_0$ is a positive integration constant depending on the flow. The eigenvalues of \eqref{1.2} are
\begin{align}\label{1.4}
 \Lambda_{\pm}=\frac{uv\pm c\sqrt{q^2-c^2}}{u^2-c^2},
\end{align}
from which we see that system \eqref{1.2} is of mixed-type: supersonic for $M>1$, subsonic for $M<1$ and sonic for $M=1$. Here $M$ is the Mach number defined by $M=q/c$. The set of points at which $M=1$ is called the sonic curve.

Since $P$ is a sonic point in the problem, that is $M=1$ at $P$,  we need solve a degenerate hyperbolic boundary value problem. A main difficulty of the paper is the singularity caused by the sonic degeneracy. In a recent paper \cite{Hu-Li2}, we investigated the existence of smooth solution of a supersonic-sonic patch bounded by a streamline and a characteristic curve. This result needs to be assigned data not only on the streamline but also on the characteristic curve such that level curves of $(M-1)$ can be taken as the ``Cauchy supports'' to obtain a global solution up to a sonic curve. Generally speaking, the data on the characteristic curves in the supersonic region are unknown {\em a priori}, which restricts  the application of the results in \cite{Hu-Li2}. Similar situations occur as the semi-hyperbolic patch problems considered in \cite{Lim, SWZ}. In contrast to the previous problems, the corresponding ``Cauchy supports'' can not be directly taken for the current problem  because only the data on the streamline are given. To overcome this difficulty, we analyze this problem not in the physical plane as handled in the previous works but in a partial hodograph coordinate system in terms of  angle variables. By introducing a set of dependent variables, the Euler equations \eqref{1.2} can be transformed as a linear system in the partial hodograph plane. We then establish the global existence of smooth solutions for the degenerate linear problem and further transform this solution back to the physical plane so as to solve the original problem. We comment that, as well-known, system \eqref{1.2} can be linearized by using the hodograph method (i.e. switching the roles of $(u,v)$ with $(x, y)$), but it is difficult in taking on boundary conditions and in returning back to the original variables due to the sonic degeneracy. The idea of using angle variables as an auxiliary coordinate system was adopted by Hu and Li \cite{Hu-Li1} to deal with the steady full Euler equations. It turns out that this coordinate system works effectively for the current and related problems.
\vspace{0.2cm}

The main result of the paper is stated as follows.

\begin{theorem}\label{thm0}
Let $\widehat{PE}: y=\varphi(x)$ be an increasing and concave smooth streamline. Suppose that the Mach number $M$ is an increasing along $\widehat{PE}$ with $M=1$ at point $P$. We further assume that $\varphi'$ and $M$ are $C^2$ functions satisfying the following condition
\begin{align}\label{1.5}
\fr{\varphi''}{1+(\varphi')^2}+\fr{2\sqrt{M^2-1}M'}{M(2+(\gamma-1) M^2)}<0,\quad {\rm on}\ \widehat{PE}.
\end{align}
Then there exists a smooth sonic curve $\widehat{PD}$ so that the Euler system \eqref{1.2} has a smooth solution in the region $PED$.
\end{theorem}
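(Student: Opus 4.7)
Following the blueprint sketched in the introduction, my plan is to work not in the physical $(x,y)$-plane but in a partial hodograph coordinate system built from angle variables. Concretely, I would first perform a characteristic decomposition of \eqref{1.2}: writing $\partial_\pm$ for derivatives along the $\Lambda_\pm$-directions, introduce the flow angle $\theta=\arctan(v/u)$, the speed $q$ (equivalently the Mach angle $\alpha=\arcsin(1/M)$ determined by $q$ through \eqref{1.3}), and express $\partial_\pm\theta$ and $\partial_\pm q$ in closed form to exhibit the Prandtl--Meyer-type Riemann invariants. I would then take a pair of angle variables — the two characteristic angles $\xi=\theta+\alpha$ and $\eta=\theta-\alpha$ seem the natural choice, by analogy with \cite{Hu-Li1} — as new independent variables, and treat $(x,y)$ together with an auxiliary scalar built from $M$ as new dependent variables. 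As indicated in the introduction, this change of variables converts \eqref{1.2} into a linear system on the $(\xi,\eta)$-plane; this linear structure is the key asset of the whole approach and is what makes the global existence reachable.

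\textbf{Reformulation of the boundary value problem.} Under this change of variables, the streamline $\widehat{PE}$ maps to a curve $\Gamma$ in the $(\xi,\eta)$-plane on which $\theta=\arctan\varphi'$ and $M$ (hence $\alpha$) are prescribed by the data; the sonic point $P$ corresponds to $\alpha=\pi/2$, i.e. to a point on the degenerate line $\{\xi=\eta\}$, and the unknown sonic curve $\widehat{PD}$ will be the image of a subsegment of this same line. Condition \eqref{1.5} translates, after a direct computation, into a strict monotonicity inequality of the form $\partial_\xi F>0$ or $\partial_\eta F<0$ (with $F$ built from $\theta$ and $\alpha$) along $\Gamma$; this is exactly what is needed for $\Gamma$ to be non-characteristic, for the Cauchy data on $\Gamma$ to be strictly monotone in the characteristic parameters, and for the hodograph map to be locally invertible away from $\{\xi=\eta\}$. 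The original problem is thereby recast as a linear characteristic boundary value problem in the $(\xi,\eta)$-plane, posed on a domain bounded by $\Gamma$ and a portion of $\{\xi=\eta\}$.

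\textbf{Existence in the partial hodograph plane.} I would attack this linear problem by a Picard-type iteration, defining successive approximations by integrating the system along the $\xi=$const and $\eta=$const lines starting from $\Gamma$. Because the equations are linear, the main analytical burden is a priori regularity, and here the obstacle — and, I expect, the hardest step of the whole proof — is uniform control of the iterates up to the degenerate line $\{\xi=\eta\}$, where some coefficients of the linear system vanish or blow up. The standard remedy is to differentiate once or twice in $\xi$ and $\eta$, commute through the system to obtain Goursat-type equations for $\partial_\xi$ and $\partial_\eta$ of each unknown, and then close a Gronwall estimate in a weighted $C^k$ norm that absorbs the degeneracy. The sign condition \eqref{1.5} should play the crucial role of ensuring that the iteration preserves the correct orientation and keeps $\alpha<\pi/2$ (i.e.\ $M>1$) strictly in the interior, so the successive approximations stay in a closed set on which the iteration is a contraction; passing to the limit then yields a smooth solution of the linear problem up to $\{\xi=\eta\}$.

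\textbf{Return to the physical plane and the sonic curve.} Once $(x,y)$ is available as a smooth function of $(\xi,\eta)$, I would verify, again using \eqref{1.5}, that the Jacobian of this map is nonvanishing in the open hodograph domain, so that the map is a local diffeomorphism onto a region in the physical plane having $\widehat{PE}$ as one boundary arc and the image of $\{\xi=\eta\}$ as the other. A standard injectivity argument along characteristics (using monotonicity of $\xi,\eta$ along the boundaries) promotes this to a global diffeomorphism onto the desired region $PED$. Pulling the angle variables back through this diffeomorphism produces a smooth solution of \eqref{1.2} in $PED$, with $M=1$ precisely on the image of $\{\xi=\eta\}$, which is then identified as the sought sonic curve $\widehat{PD}$; its smoothness and the uniform regularity of the solution up to it follow directly from the corresponding statements established in the $(\xi,\eta)$-plane.
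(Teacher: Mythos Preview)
Your high-level architecture --- pass to angle-based hodograph coordinates, obtain a linear degenerate hyperbolic problem, solve it globally, then invert --- is the paper's strategy. But two points depart from what the paper actually does, and the second is a genuine gap.

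First, a slip in the geometry: with Mach angle $\alpha=\arcsin(1/M)$, sonic means $\alpha=\pi/2$, so $\xi-\eta=2\alpha=\pi$. The degenerate (sonic) set is $\{\xi-\eta=\pi\}$, not $\{\xi=\eta\}$; the latter would correspond to $M=\infty$. Relatedly, $\xi=\theta+\alpha$ and $\eta=\theta-\alpha$ are the characteristic \emph{inclination} angles, not the Prandtl--Meyer Riemann invariants, so your claim that integration proceeds along the coordinate lines $\xi=\mathrm{const}$, $\eta=\mathrm{const}$ needs justification; in the paper's coordinates $(t,r)=(\cos\omega,\hat\theta_1-\theta)$ the characteristics are the curves $dr/dt=\pm\lambda(t)$ of (3.8a), not coordinate lines.

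Second, and this is the substantive issue, the paper does \emph{not} take $(x,y)$ as the unknowns in the hodograph plane, and the introduction warns explicitly that the classical hodograph (swap $(u,v)$ with $(x,y)$) ``is difficult in taking on boundary conditions and in returning back to the original variables due to the sonic degeneracy.'' The paper's linear system (3.8) is written for $\overline U=1/U$, $\overline V=-1/V$, where $U=\bar\partial^+\Xi$ and $V=\bar\partial^-\Xi$ are characteristic derivatives of a specific function $\Xi$ of the Mach angle; it is this reciprocal substitution that yields linearity together with a singular term of the precise form $(\overline U-\overline V)/(2t)$. The technical core of the proof is then: (i) barrier-type $L^\infty$ bounds on $(\overline U,\overline V)$ (Lemma~3.2), (ii) a bound on $R=t\,\overline U_r$, $S=t\,\overline V_r$ via the auxiliary system (3.25), whose $t^{-3/2}$ integrating factor is what makes the estimate close (Lemma~3.3), and (iii) the resulting uniform bound on $\overline W=(\overline U-\overline V)/t$ (Lemma~3.4), which is exactly what is needed to extend the solution to $t=0$ and to show $(x,y)$, recovered \emph{a posteriori} by integrating (4.6) along characteristics, are well defined. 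Your proposal's ``Picard iteration with a weighted $C^k$ Gronwall'' does not identify any of this structure, and in particular does not explain how to control the singular quotient. Finally, condition~\eqref{1.5} does not enter as an orientation-preserving device for an iteration: it is precisely the inequality that gives the boundary values $\hat a,\hat b$ of $(U,V)$ in (2.23)--(2.24) definite signs, which is what drives the barrier argument in Lemma~3.2 and the global injectivity check (4.7a).
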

\begin{remark}\label{r0}
The condition \eqref{1.5} is obviously satisfied near the sonic point $P$. The detailed description of this condition can be found in Remark \ref{r2} in Section \ref{s2}.
\end{remark}

The rest of the paper is organized as follows. In Section \ref{s2}, we introduce the angle variables to formulate the problem and then state the main result. In Section \ref{s3}, we change the problem to a degenerate linear one and then establish the global existence of classical solutions to the linear problem. The properties of solutions up to degenerate line are also discussed in this section. Finally, we show that the coordinate transformation is global one-to-one, and then return the solution in the partial hodograph plane to that in the original physical plane in Section \ref{s4}.

\section{Reformulation of  problem and main result}\label{s2}

In this section, we first derive the system by introducing the flow angle and the Mach angle as the dependent variables, then detailedly reformulate the problem and restate the result in terms of these angle variables.

\subsection{Equations in terms of the angle variables}

The Euler system \eqref{1.2} can be written as
\begin{align}\label{2.1}
\left(\begin{array}{ccccc}
    c^2-u^2 & -uv   \\
       0    &-1
 \end{array}\right)
 \left(\begin{array}{c}
    u \\
    v
 \end{array}\right)_x+
\left(\begin{array}{ccccc}
    -uv & c^2-v^2 \\
    1 & 0
 \end{array}\right)
\left(\begin{array}{c}
    u \\
    v
 \end{array}\right)_y=0.
\end{align}
Standard calculation arrives at the characteristic form of \eqref{2.1}
\begin{align}\label{2.2}
\left\{
\begin{array}{l}
  \pa^+u+\Lambda_-\pa^+v =0,\\
  \pa^-u+\Lambda_+\pa^-v=0,
\end{array}
\right.
\end{align}
where $\pa^\pm$ are differential operators defined as $\pa^\pm=\pa_x+\Lambda_\pm\pa_y$, $\Lambda_\pm$ are positive and negative eigenvalues given in \eqref{1.4}.

As in \cite{LYZ, Li-Zheng1, ZhangT1}, we introduce the flow angle $\theta$ and the Mach angle $\omega$ as follows
\begin{align}\label{2.3}
\tan\theta=\fr{v}{u},\quad \sin\omega=\fr{1}{M},
\end{align}
and denote
\begin{align}\label{2.4}
\alpha:=\theta+\omega,\quad \beta:=\theta-\omega.
\end{align}
From the expressions of $\Lambda_\pm$ in \eqref{1.4}, one can see that
\begin{align}\label{2.5}
\tan\alpha=\Lambda_+,\quad \tan\beta=\Lambda_-.
\end{align}
Thus the angles $\alpha$ and $\beta$ are the inclination angles of characteristic curves.
Moreover, in terms of the angle variables, the velocity $(u,v)$ can be expressed as
\begin{align}\label{2.6}
u=c\fr{\cos\theta}{\sin\omega},\quad v=c\fr{\sin\theta}{\sin\omega}.
\end{align}

Denote
\begin{align}\label{2.7}
\bar{\pa}^+=\cos\alpha\pa_x+\sin\alpha\pa_y,\quad
\bar{\pa}^-=\cos\beta\pa_x+\sin\beta\pa_y,\quad
\bar{\pa}^0=\cos\theta\pa_x+\sin\theta\pa_y,
\end{align}
from which we have
\begin{align}\label{2.8}
\pa_x=-\fr{\sin\beta\bar{\pa}^+-\sin\alpha\bar{\pa}^-}{\sin(2\omega)},\quad \pa_y=\fr{\cos\beta\bar{\pa}^+-\cos\alpha\bar{\pa}^-}{\sin(2\omega)},
\quad
\bar{\pa}^0=\fr{\bar{\pa}^++\bar{\pa}^-}{2\cos\omega}.
\end{align}
Putting \eqref{2.6} into \eqref{1.3} leads to
\begin{align}\label{2.9}
\fr{\bar\pa^i c}{c}=\fr{\kappa}{\varpi(\kappa+\varpi^2)}\bar\pa^i\varpi,\quad i=\pm, 0,
\end{align}
where $\kappa=(\gamma-1)/2$ and $\varpi=\sin\omega$. Combining with \eqref{2.2} and \eqref{2.7}-\eqref{2.9}, we can obtain a new system in terms of the variables $(\theta,\omega)$ as follows
\begin{align}\label{2.10}
\left\{
\begin{array}{l}
   \bar{\pa}^+\theta+\fr{\cos\omega}{\kappa+\varpi^2}\bar{\pa}^+\varpi =0, \\[3pt]
   \bar{\pa}^-\theta-\fr{\cos\omega}{\kappa+\varpi^2}\bar{\pa}^-\varpi =0.
  \end{array}
\right.
\end{align}
Here and below, we apply the mixed variables $\omega$ and $\varpi$ in a system for convenience.

For later use, we further introduce a new variable
\begin{align}\label{2.11}
\Xi=\fr{1}{4\kappa}\ln\bigg(\fr{\varpi^2}{\kappa+\varpi^2}\bigg),
\end{align}
from which one has
\begin{align}\label{2.12}
\bar{\pa}^i\varpi=2\varpi(\kappa+\varpi^2)\bar{\pa}^i\Xi,\quad i=\pm, 0.
\end{align}
We insert the above into \eqref{2.10} to obtain
\begin{align}\label{2.13}
\left\{
\begin{array}{l}
   \bar{\pa}^+\theta+\sin(2\omega)\bar{\pa}^+\Xi =0, \\[3pt]
   \bar{\pa}^-\theta-\sin(2\omega)\bar{\pa}^-\Xi =0.
  \end{array}
\right.
\end{align}
According to the commutator relation between $\bar\pa^+$ and $\bar\pa^-$ \cite{LYZ, Li-Zheng1}
\begin{align}\label{2.14}
\bar{\pa}^-\bar{\pa}^+-\bar{\pa}^+\bar{\pa}^-
=&\fr{\cos(2\omega)\bar{\pa}^-\alpha-\bar{\pa}^+\beta}{\sin(2\omega)}\bar{\pa}^+
+\fr{\cos(2\omega)\bar{\pa}^+\beta-\tilde{\pa}^-\alpha}{\sin(2\omega)}\bar{\pa}^-,
\end{align}
we can derive the equations for $U:=\bar{\pa}^+\Xi$ and $V:=\bar{\pa}^-\Xi$
\begin{align}\label{2.15}
\left\{
\begin{array}{l}
\bar{\pa}^-U=\fr{\kappa U}{\cos^2\omega}(U-\cos(2\omega)V)+\fr{U}{\cos^2\omega}(U+V\cos^2(2\omega)), \\[3pt]
\bar{\pa}^+V=\fr{\kappa V}{\cos^2\omega}(V-\cos(2\omega)U)+\fr{V}{\cos^2\omega}(V+U\cos^2(2\omega)).
\end{array}
\right.
\end{align}

\subsection{Newly formulated problem and  main result}

We now set the problem in detail by mimicking the real setting of airfoil problem. Let $\widehat{PE}: y=\varphi(x)\ (x\in[x_1,x_2])$ be a smooth curve and vector function $(\hat{u}(x), \hat{v}(x))\ (x\in[x_1,x_2])$ be a velocity distribution on $\widehat{PE}$. Denote
\begin{align}\label{2.16}
\hat{c}(x)=\sqrt{\kappa[B_0-\hat{u}^2(x)-\hat{v}^2(x)]}.
\end{align}
Thanks to \eqref{2.3}, one can obtain the data of $(\theta, \varpi)$ on $\widehat{PE}$
\begin{align}\label{2.17}
\theta(x,\varphi(x))=\arctan\bigg(\fr{\hat{v}(x)}{\hat{u}(x)}\bigg)=:\hat{\theta}(x), \quad \varpi(x,\varphi(x))=\fr{\hat{c}(x)}{\sqrt{\hat{u}^2(x)+\hat{v}^2(x)}}=:\hat{\varpi}(x).
\end{align}
Our problem is reformulated as follows.
\begin{problem}\label{p1}
Let $\widehat{PE}: y=\varphi(x)\ (x\in[x_1,x_2])$ be a smooth curve. We assign the boundary data $(\theta, \varpi)=(\hat{\theta}, \hat{\varpi})(x)$ on $\widehat{PE}$ such that
\begin{align}\label{2.18}
\hat{\theta}(x)=\arctan\varphi'(x),  \hat{\varpi}(x)\in(0,1)\ \ \forall\ x\in(x_1, x_2], \ \ {\rm and}\ \hat{\varpi}(x_1)=1.
\end{align}
We find a smooth sonic curve $\widehat{PD}$ and build a smooth supersonic solution to system \eqref{2.10} in the region $PED$ with a negative characteristic curve $\widehat{ED}$. See Fig. 2.
\end{problem}
\begin{remark}\label{r1}
The conditions in \eqref{2.18} mean that the arc $\widehat{PE}$ is a streamline and the state of flow is supersonic on $\widehat{PE}$ with sonic at $P$.
\end{remark}

Assume that the functions $\varphi(x)$ and $\hat{\varpi}(x)$ satisfy
\begin{align}\label{2.19}
\begin{array}{l}
\varphi'(x), \hat{\varpi}(x)\in C^2([x_1,x_2]), \\
\varphi_0\leq\varphi'(x)\leq\varphi_1, \quad  \varphi''(x)<0, \quad \hat{\varpi}'(x)<0,
\end{array}
\end{align}
where $\varphi_0$ and $\varphi_1$ are some positive constants. Corresponding to \eqref{1.5}, we further suppose that $\varphi(x)$ and $\hat{\varpi}(x)$ satisfy
\begin{align}\label{2.20}
\bigg(\fr{\varphi''}{1+(\varphi')^2} -\fr{\sqrt{1-\hat{\varpi}^2}}{\kappa+\hat{\varpi}^2}\hat{\varpi}'\bigg)(x)<0,\quad \forall\ x\in[x_1,x_2].
\end{align}
\begin{remark}\label{r2}
It follows by \eqref{2.19} that the arc $\widehat{PE}$ is an increasing and concave streamline, along which the angle variable $\varpi$ is a decreasing function. We mention that this condition is reasonable in the``true" airfoil problem. The condition \eqref{2.20} is mainly used to determine the signs of $U$ and $V$ on the arc $\widehat{PE}$. This condition obviously holds near the sonic point $P$.
\end{remark}

Thus Theorem \ref{thm0} is restated in the next theorem.
\begin{theorem}\label{thm1}
Let \eqref{2.19} and \eqref{2.20} be satisfied.
Then there exists a smooth sonic curve $\widehat{PD}: y=\psi(x)\ (x\in[x_1,x_3])$ and Problem \ref{p1} admits a global smooth solution $(\theta, \varpi)\in C^2$ in the region $PED$. Moreover, the sonic curve $\widehat{PD}$ is $C^{1,\fr{1}{6}}$-continuous and the solution $(\theta, \varpi)(x,y)$ is uniformly $C^{1,\fr{1}{6}}$ up to $\widehat{PD}$. Furthermore, the function $\theta$ is strictly monotone decreasing along the sonic curve $\widehat{PD}$ and the negative characteristic $\widehat{DE}$.
\end{theorem}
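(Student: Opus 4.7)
The plan is to follow the program outlined in Section \ref{s1}: pass to a partial hodograph coordinate system built from the angle variables, in which Problem \ref{p1} becomes a \emph{linear} degenerate hyperbolic problem that can be solved globally; then invert the coordinate map to recover the physical solution together with its sonic boundary $\widehat{PD}$. For the hodograph step I would take $(\alpha,\beta)=(\theta+\omega,\theta-\omega)$ as new independent variables. Because the coefficient of $\bar\partial^\pm\Xi$ in \eqref{2.13} depends only on $\omega=(\alpha-\beta)/2$, the $C^\pm$ characteristics of \eqref{2.13} become the coordinate lines $\beta=\mathrm{const}$ and $\alpha=\mathrm{const}$. Writing $x$ and $y$ as functions of $(\alpha,\beta)$ and combining the relations $y_\alpha=x_\alpha\tan\alpha$, $y_\beta=x_\beta\tan\beta$ with the characteristic-decomposition equations \eqref{2.15} for $U$ and $V$, one obtains a linear system for $(x,y,U,V)$ on a curvilinear triangle whose two non-degenerate sides are the images of $\widehat{PE}$ and of the negative characteristic issued from $E$, while the third side is the sonic diagonal $\alpha-\beta=\pi$ on which a factor $\cos\omega$ vanishes.

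The data on $\widehat{PE}$ are transferred to the hodograph plane via \eqref{2.18}; condition \eqref{2.19} makes the image of $\widehat{PE}$ a smooth non-characteristic arc ending at the sonic corner $P$, and condition \eqref{2.20} is precisely what delivers the definite signs of $U$ and $V$ on this arc (Remark \ref{r2}). I would then construct the solution of the linear problem by a Picard iteration on the shrinking sub-triangles $\{\alpha-\beta\le\pi-\varepsilon\}$, propagating through the iteration both the sign conditions on $U,V$ (by the maximum-principle structure of \eqref{2.15}) and weighted estimates of the form $|U|,|V|\le C\cos\omega$ together with $|x_\alpha|,|y_\alpha|,|x_\beta|,|y_\beta|\sim\cos\omega$; passing $\varepsilon\to 0^+$ yields a classical solution up to the sonic line. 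The coordinate map $(\alpha,\beta)\mapsto(x,y)$ is then globally one-to-one because its Jacobian keeps a definite sign in the interior and degenerates only at the controlled rate $\cos\omega$ near the sonic diagonal, which rules out folds. Pulling back to the physical plane produces the sonic curve $\widehat{PD}:y=\psi(x)$ and a $C^2$ supersonic solution of \eqref{2.10} inside $PED$. The strict monotonicity of $\theta$ along $\widehat{PD}$ and $\widehat{DE}$ then follows from the invariance of the signs of $U,V$, since on these curves $d\theta$ reduces to a definite-sign multiple of $U\,d\alpha$ or $V\,d\beta$.

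The H\"older exponent $1/6$ in the $C^{1,1/6}$ regularity of $\psi$ and of $(\theta,\varpi)$ up to $\widehat{PD}$ is a quantitative consequence of a cubic cusp of the inverse map at the sonic boundary: the physical arc length near $\widehat{PD}$ scales like the cube of the hodograph distance to the sonic diagonal, because one factor of $\cos\omega$ comes from $\sin(2\omega)$ in \eqref{2.13} and two further factors come from the rates of $(x_\alpha,y_\alpha)$ and $(x_\beta,y_\beta)$, so a H\"older-$\tfrac12$ bound in hodograph coordinates translates to a H\"older-$\tfrac16$ bound in physical coordinates. The hard part will be closing the global iteration despite the $1/\cos^2\omega$ factors on the right-hand sides of \eqref{2.15}: the scheme is stable only if the approximate solutions maintain the weighted bounds and keep $U,V$ of one sign uniformly in the iteration index. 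This is the structural role of \eqref{2.20}, and it is also where the specific exponent $1/6$ is ultimately fixed; carrying all these rates simultaneously through the iteration and then through the inverse map is the technical core of the proof.
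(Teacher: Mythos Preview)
Your high-level program---pass to a hodograph-type coordinate system, solve a linear degenerate problem there, and invert---is exactly the paper's strategy, but two concrete choices you make do not work as stated and would prevent you from closing the argument.

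First, $(\alpha,\beta)=(\theta+\omega,\theta-\omega)$ are \emph{not} Riemann invariants of \eqref{2.13}, so the $C^{\pm}$ characteristics do not become the coordinate lines $\beta=\text{const}$, $\alpha=\text{const}$ in the $(\alpha,\beta)$ plane. Along a $C^{+}$ characteristic the first equation of \eqref{2.10} gives $d\theta=-g(\omega)\,d\omega$ with $g(\omega)=\cos^{2}\omega/(\kappa+\sin^{2}\omega)$, hence $d\beta/d\alpha=-(1+g)/(1-g)\neq 0$; the identities $y_\alpha=x_\alpha\tan\alpha$, $y_\beta=x_\beta\tan\beta$ therefore fail. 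The genuine Riemann invariants are $\theta\pm G(\omega)$ with $G'=g$, and the paper sidesteps this by taking instead $(t,r)=(\cos\omega,\ \hat\theta_1-\theta)$: the characteristic slopes in \eqref{3.8a} then depend only on $t$, and the system for $(U,V)$ becomes the \emph{nonlinear} system \eqref{3.6}. The crucial step you are missing is the reciprocal substitution $\overline U=1/U$, $\overline V=-1/V$ of \eqref{3.7}, which turns \eqref{3.6} into the genuinely linear system \eqref{3.8}; without it you would be iterating on a quadratic system with $1/\cos^2\omega$ coefficients, and the bootstrap you describe does not obviously close.

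Second, the weighted bound $|U|,|V|\le C\cos\omega$ is the wrong behaviour: condition \eqref{2.20} forces $U>0$, $V<0$ and bounded \emph{away} from zero on $\widehat{PE}$ (see \eqref{2.24}), and the paper proves this persists globally by showing $\overline U,\overline V$ stay between positive constants (Lemma~\ref{lem2}). It is $U+V$ that vanishes to order $t=\cos\omega$ at the sonic line, encoded in the boundedness of $\overline W=(\overline U-\overline V)/t$ (Lemma~\ref{lem4}). This also changes the provenance of the exponent $1/6$: in the $(t,r)$ plane one gets $C^{1/3}$ regularity (Lemma~\ref{lem5}) because $\lambda(t)\sim t^{2}$ makes $|r_2-r_1|\sim t_m^{3}$ along the characteristic fan, and then Lemma~\ref{lem6} converts $C^{1/3}$ to $C^{1/6}$ in $(x,y)$ via the square-root relation $|t''-t'|^{2}\lesssim |(x'',y'')-(x',y')|$ coming from $t^{2}=1-\varpi^{2}$ with $\varpi$ Lipschitz. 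So the $1/6$ is $\tfrac13\cdot\tfrac12$, not a $\tfrac12$ hodograph bound pushed through a cubic cusp. Finally, global injectivity of the inverse map is obtained not from a Jacobian argument alone but from the computation \eqref{4.7a} showing $\theta$ is strictly monotone along each level curve $\varpi=\text{const}$, which simultaneously gives the monotonicity of $\theta$ along $\widehat{PD}$.
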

\begin{remark}\label{r4}
The results in Theorem \ref{thm1} can also be applied to explore the properties of solutions in the region near the upstream vertex $P$ of supersonic bubble described as in Fig. 1.
\end{remark}

\subsection{The boundary information for $(U,V)$}

The strategy of this paper is to solve the singular system \eqref{2.15} for $(U,V)$ in a partial hodograph plane. Thus we need to derive the boundary data of $(U,V)$ on the arc $\widehat{PE}$ from the functions $(\hat{\theta}, \hat{\varpi})(x)$.

According to \eqref{2.8} and \eqref{2.10}, one gets
\begin{align}\label{2.21}
\dps\bar\pa^+\theta=\cos\omega\bar\pa^0\theta-\fr{\cos^2\omega}{\kappa+\varpi^2}\bar\pa^0\varpi, \quad
\dps\bar\pa^-\theta=\cos\omega\bar\pa^0\theta+\fr{\cos^2\omega}{\kappa+\varpi^2}\bar\pa^0\varpi,
\end{align}
which together with \eqref{2.13} acquires
\begin{align}\label{2.22}
U=\fr{\sqrt{1-\varpi^2}}{2\varpi(\kappa+\varpi^2)}\bar\pa^0\varpi-\fr{1}{2\varpi}\bar\pa^0\theta,\quad V=\fr{\sqrt{1-\varpi^2}}{2\varpi(\kappa+\varpi^2)}\bar\pa^0\varpi+\fr{1}{2\varpi}\bar\pa^0\theta.
\end{align}
Recalling the fact that the arc $\widehat{PE}$ is a streamline, we have
\begin{align}\label{2.22a}
\bar\pa^0\theta|_{\widehat{PE}}=\cos\hat{\theta}(x)\hat{\theta}'(x) =\fr{\cos\hat{\theta}(x)\varphi''(x)}{1+(\varphi'(x))^2},\quad \bar\pa^0\varpi|_{\widehat{PE}}=\cos\hat{\theta}(x)\hat{\varpi}'(x),
\end{align}
which combined with \eqref{2.22} yields
\begin{align}\label{2.23}
\begin{array}{l}
\dps U|_{\widehat{PE}}=\fr{\cos\hat{\theta}}{2\hat{\varpi}} \bigg(\fr{\sqrt{1-\hat{\varpi}^2}}{\kappa+\hat{\varpi}^2}\hat{\varpi}'-\fr{\varphi''}{1+(\varphi')^2}\bigg)(x) :=\hat{a}(x), \\[8pt]
\dps V|_{\widehat{PE}}=\fr{\cos\hat{\theta}}{2\hat{\varpi}} \bigg(\fr{\sqrt{1-\hat{\varpi}^2}}{\kappa+\hat{\varpi}^2}\hat{\varpi}'+\fr{\varphi''}{1+(\varphi')^2}\bigg)(x) :=\hat{b}(x).
\end{array}
\end{align}
For later use, we here give the boundary data $\bar\pa^0\Xi$ on $\widehat{PE}$. Combining with \eqref{2.12} and \eqref{2.22a} achieves
\begin{align}\label{2.23a}
\bar\pa^0\Xi|_{\widehat{PE}}=\fr{\cos\hat\theta(x)\hat{\varpi}'(x)}{2\hat{\varpi}(x)(\kappa+\hat{\varpi}^2(x))} :=\hat{d}(x).
\end{align}
Moreover, it suggests by the conditions \eqref{2.19} and \eqref{2.20} that
\begin{align}\label{2.24}
\begin{array}{c}
\hat{a}(x), \hat{b}(x), \hat{d}(x)\in C^1([x_1,x_2]), \\
\hat{m}_0\leq \hat{a}(x)\leq\hat{M}_0,\  -\hat{M}_0\leq\hat{b}(x),\hat{d}(x)\leq -\hat{m}_0,\ \ \forall\ x\in[x_1,x_2],
\end{array}
\end{align}
for some positive constants $\hat{m}_0$ and $\hat{M}_0$.

\section{Solutions in a partial hodograph plane}\label{s3}

In this section, we solve the singular system \eqref{2.15} with the boundary data \eqref{2.23} under the conditions \eqref{2.24} by introducing a partial hodograph plane.

\subsection{Reformulated problem in a  partial hodograph plane}

We reformulate the problem into a new linear problem by introducing a partial hodograph transformation. Introduce the coordinate transformation $(x,y)\mapsto(t,r)$
\begin{align}\label{3.1}
t=\cos\omega(x,y),\quad r=\hat{\theta}_1-\theta(x,y),
\end{align}
where $\hat{\theta}_1=\hat{\theta}(x_1)$,
from which and the conditions $\hat\theta'<0$ and $\hat\varpi'<0$. Then  we see that the arc $\widehat{PE}$
is transformed into a curve $\widehat{P'E'}:\ r=\tilde{r}(t)\ (t\in[0,t_0])$ in the half plane of $t\geq0$ defined through a parametric $x$
\begin{align}\label{3.2}
t=\cos\hat\omega(x),\ \ r=\hat{\theta}_1-\hat\theta(x), \quad (x\in[x_1,x_2]).
\end{align}
The number $t_0=\cos\hat\omega(x_2)$ is a positive constant. Moreover, the smooth function $r=\hat{\theta}_1-\hat\theta(x)$ is strictly increasing, which implies that there exists an inverse function, denoted by $x=\hat{x}(r)$ ($r\in[0,r_0]$), where $r_0=\tilde{r}(t_0)$. We now define $\hat{f}(r)=\hat{f}(\hat{x}(r))\ (f=a,b,d)$, and then obtain the boundary data of $(U,V)$ on $\widehat{P'E'}$
\begin{align}\label{3.3}
U|_{\widehat{P'E'}}=\hat{a}(r),\quad V|_{\widehat{P'E'}}=\hat{b}(r),\ \ \forall\ r\in[0,r_0].
\end{align}
It is obvious by \eqref{2.24} that
\begin{align}\label{3.4}
\begin{array}{c}
\hat{a}(r), \hat{b}(r), \hat{d}(r)\in C^1([0,r_0]), \\
\hat{m}_0\leq \hat{a}(r)\leq\hat{M}_0,\  -\hat{M}_0\leq\hat{b}(r), \hat{d}(r)\leq -\hat{m}_0,\ \ \forall\ r\in[0,r_0].
\end{array}
\end{align}

We now derive the equations of $(U,V)$ in terms of $(t,r)$. By a direct calculation, one finds by using \eqref{2.12}, \eqref{2.13} and \eqref{3.1} that
\begin{align}\label{3.5}
\bar{\pa}^+=-\fr{2F}{t}U\pa_t+2\sqrt{1-t^2}Ut\pa_r,\quad
\bar{\pa}^-=-\fr{2F}{t}V\pa_t-2\sqrt{1-t^2}Vt\pa_r,
\end{align}
where $F=F(t)=(1-t^2)(\kappa+1-t^2)>0$. Therefore, system \eqref{2.15} can be transformed to a new closed system of $(U(t,r),V(t,r))$
\begin{align}\label{3.6}
\left\{
\begin{array}{l}
\dps U_t+\fr{\sqrt{1-t^2}t^2}{F}U_r
=-\fr{(\kappa+1)U}{2FV} \fr{U+V}{t}+\fr{\kappa+2-2t^2}{F}Ut,\\[8pt]
\dps V_t-\fr{\sqrt{1-t^2}t^2}{F}V_r
=-\fr{(\kappa+1)V}{2FU} \fr{U+V}{t}+\fr{\kappa+2-2t^2}{F}Vt.
\end{array}
\right.
\end{align}
We further introduce
\begin{align}\label{3.7}
\overline{U}=\fr{1}{U},\quad \overline{V}=-\fr{1}{V},
\end{align}
to transform \eqref{3.6} to a linear system
\begin{align}\label{3.8}
\left\{
\begin{array}{l}
\dps \pa_+\overline{U}
=\fr{\overline{U}-\overline{V}}{2t}+\fr{\kappa+2-t^2}{2F}(\overline{U}-\overline{V})t -\fr{\kappa+2-2t^2}{F}\overline{U}t,\\[8pt]
\dps \pa_-\overline{V}
=\fr{\overline{V}-\overline{U}}{2t}-\fr{\kappa+2-t^2}{2F}(\overline{U}-\overline{V})t  -\fr{\kappa+2-2t^2}{F}\overline{V}t,
\end{array}
\right.
\end{align}
where
\begin{align}\label{3.8a}
\pa_+={\pa}_t+\lambda(t){\pa}_r,\quad \pa_-={\pa}_t-\lambda(t){\pa}_r,\quad \lambda(t)=\fr{\sqrt{1-t^2}t^2}{F(t)}.
\end{align}
In addition, combining with \eqref{3.3}, \eqref{3.4} and \eqref{3.7}, the boundary information of $(\overline{U}, \overline{V})$ on $\widehat{P'E'}$ are
\begin{align}\label{3.9}
\overline{U}_{\widehat{P'E'}}=\fr{1}{\hat{a}(r)}:=\bar{a}(r),\quad \overline{V}_{\widehat{P'E'}}=-\fr{1}{\hat{b}(r)}:=\bar{b}(r),\ \ \forall\ r\in[0,r_0],
\end{align}
with
\begin{align}\label{3.10}
\begin{array}{c}
\bar{a}(r), \bar{b}(r)\in C^1([0,r_0]), \\
\bar{m}_0\leq \bar{a}(r), \bar{b}(r)\leq\bar{M}_0,\ \ \forall\ r\in[0,r_0]
\end{array}
\end{align}
for two positive constants $\bar{m}_0$ and $\bar{M}_0$.

It is easy to see that the characteristic curves for system \eqref{3.8} are independent of the solution. Then from the point $E'(t_0, r_0)$, we draw the positive characteristic curve, denoted by $r=\check{r}(t)\ (t\in[0,t_0])$, up to the line $t=0$ at a point $D'(0, r^*)$, where the function $\check{r}(t)$ and the number $r^*$ are defined as
\begin{align}\label{3.11}
\check{r}(t)=r_0-\int_{t}^{t_0}\fr{\sqrt{1-s^2}s^2}{F(s)}\ {\rm d}s,\quad r^*=r_0-\int_{0}^{t_0}\fr{\sqrt{1-s^2}s^2}{F(s)}\ {\rm d}s.
\end{align}
Thus in terms of $(t,r)$-plane, our main result can be stated as follows.
\begin{theorem}\label{thm2}
Assume that \eqref{3.10} holds. Then there exists a global smooth solution $(\overline{U}(t,r),\overline{V}(t,r))$ for problem \eqref{3.8} \eqref{3.9} in the region $P'E'D'$. Moreover, the solution $(\overline{U},\overline{V})$ and the quantity $(\overline{U}-\overline{V})/t$ are uniformly $C^{\fr{1}{3}}$ up to $t=0$.
\end{theorem}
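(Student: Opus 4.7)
The plan is to remove the singular $1/(2t)$ term in \eqref{3.8} by passing to the symmetric/antisymmetric combinations $S=\overline{U}+\overline{V}$ and $Z=(\overline{U}-\overline{V})/t$. Adding the two equations of \eqref{3.8} cancels the $1/(2t)$ piece outright; subtracting them and invoking the algebraic identity $\tfrac{\kappa+2-t^{2}}{F}-\tfrac{\kappa+2-2t^{2}}{F}=\tfrac{t^{2}}{F}$ converts the remaining $1/t$ contribution into a smooth coefficient. A short computation yields the reduced linear hyperbolic system
\begin{align*}
\partial_t S + t\lambda(t)\,\partial_r Z &= -\tfrac{\kappa+2-2t^{2}}{F}\,t\,S,\\
\partial_t Z + \tfrac{\lambda(t)}{t}\,\partial_r S &= \tfrac{t^{3}}{F}\,Z,
\end{align*}
whose coefficients are smooth and bounded on $[0,t_0]$, with characteristic slopes still $\pm\lambda(t)$. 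The boundary datum $S_0=\bar a+\bar b$ is $C^{1}$ by \eqref{3.10}; for $Z_0=(\bar a-\bar b)/t$, the formulas \eqref{2.23} give $\hat a(x_1)+\hat b(x_1)=0$ at $P$ (since $\sqrt{1-\hat\varpi^{2}}$ vanishes there), so $\bar a(0)=\bar b(0)$, and $Z_{0}$ is bounded and $C^{1}$ on $\widehat{P'E'}$.

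With the singularity removed, one constructs $(S,Z)$ by a Picard iteration based on the method of characteristics. Thanks to \eqref{2.19}, the arc $\widehat{P'E'}$ is transversal to the two characteristic families, which foliate the closed triangle $\overline{P'E'D'}$ with feet on $\widehat{P'E'}$. Each iterate is obtained by integrating the reduced equations along $\pm$-characteristics; a Gr\"onwall argument in $t$ on $[0,t_0]$ yields a uniform $C^{0}$ bound and contraction, giving a unique fixed point $(S,Z)\in C^{0}(\overline{P'E'D'})$. Differentiating the reduced system and repeating the argument upgrades the regularity to $C^{1}$ in the interior, and uniqueness follows from the same estimate applied to the difference of two solutions.

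The main obstacle is the uniform $C^{1/3}$ regularity up to the sonic segment $\{t=0\}$. The natural H\"older exponent here is forced by the characteristic scaling: since $\lambda(t)\sim t^{2}/(\kappa+1)$ as $t\to 0$, the $\pm$-characteristics satisfy $r(t)-r(0)\sim \pm t^{3}/[3(\kappa+1)]$, so displacements of size $\Delta r$ along the sonic line correspond to vertical displacements $\Delta t\sim(\Delta r)^{1/3}$. The strategy is to bound $|\partial_t S|,|\partial_t Z|$ uniformly on $\overline{P'E'D'}$ by reading them off the reduced system (the right-hand sides carry the factors $t$ and $t^{3}$ respectively, hence are bounded), and to bound $|\partial_r S|,|\partial_r Z|$ by integrating the derivative equations along characteristics. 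Pairing the two estimates through $|\Delta t|\le C|\Delta r|^{1/3}$ promotes Lipschitz control in $t$ to $\tfrac{1}{3}$-H\"older control in $r$, delivering $(S,Z)\in C^{1/3}(\overline{P'E'D'})$ uniformly. Since $\overline U=\tfrac12(S+tZ)$, $\overline V=\tfrac12(S-tZ)$, and $(\overline U-\overline V)/t=Z$, this simultaneously produces the uniform $C^{1/3}$ regularity of $(\overline U,\overline V)$ and of $(\overline U-\overline V)/t$ claimed in the theorem. The hard part is the last step: controlling the $\partial_r$ derivatives along characteristics without losing powers of $t$ near $t=0$, for which the careful bookkeeping of the weights $t\lambda$ and $\lambda/t$ in the reduced system is essential.
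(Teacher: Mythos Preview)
Your change of variables to $S=\overline U+\overline V$ and $Z=(\overline U-\overline V)/t$ is correct and the reduced system you write down is right, but there is a genuine gap in the regularity argument. You assert that $|\partial_r S|$ and $|\partial_r Z|$ can be bounded uniformly on $\overline{P'E'D'}$; in fact these quantities are \emph{not} expected to stay bounded up to $t=0$. The paper only controls the weighted derivatives $R:=t\,\overline U_r$ and $t\,\overline V_r$ (Lemma~\ref{lem3}), which permits $\overline U_r,\overline V_r$ to blow up like $1/t$, hence $S_r\sim 1/t$ and $Z_r\sim 1/t^{2}$ generically---consistent with the conclusion being $C^{1/3}$ rather than Lipschitz in $r$. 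That weighted estimate is the heart of the matter: differentiating \eqref{3.8} in $r$ and multiplying by $t$ produces a singular term $\tfrac{3}{2}R/t$ whose sign and precise coefficient allow the integrating-factor reduction $\partial_+(t^{-3/2}R)=O(t^{-5/2})$ and a barrier argument to close; your sketch supplies no substitute for this step.

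This also undercuts your argument for $|\partial_t S|,|\partial_t Z|$: saying ``the right-hand sides carry factors $t$ and $t^{3}$'' ignores the transport pieces $t\lambda\,\partial_r Z$ and $(\lambda/t)\,\partial_r S$, which become bounded only \emph{after} one has the weighted $r$-derivative estimates (then $t\lambda\sim t^{3}$ absorbs $Z_r\sim t^{-2}$ and $\lambda/t\sim t$ absorbs $S_r\sim t^{-1}$). Finally, the reduced $(S,Z)$ system is not in diagonal form, and diagonalizing it returns exactly $(\overline U,\overline V)$ with the $1/(2t)$ coupling, so ``integrating the reduced equations along $\pm$-characteristics'' for a Picard iteration is not well-defined as written. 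The paper instead obtains existence away from $t=0$ by classical linear hyperbolic theory and then pushes to $t=0$ via maximum-principle and barrier arguments (Lemmas~\ref{lem1}--\ref{lem4}); your route would need an analogue of those weighted bounds, which you have identified as ``the hard part'' but not actually carried out.
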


\subsection{Existence and uniform boundedness of solutions}

This subsection is devoted to proving Theorem \ref{thm2}. Let $\eps\in(0,t_0]$ be an arbitrary constant. Denote $\Omega_\eps=\{(t,r)|\ t\geq\eps\}\cap P'E'D'$.
Due to the facts that $\widehat{P'E'}$ is a space-like curve except point $P'$ and the linear system \eqref{3.8} is strictly hyperbolic in the region $\Omega_\eps$, the global existence of smooth solutions for problem \eqref{3.8} \eqref{3.9} can be directly obtained by the classical theory of linear equations. Thus we have
\begin{lemma}\label{lem1}
Under the assumption \eqref{3.10}, the linear problem \eqref{3.8} \eqref{3.9} admits a global solution $(\overline{U}(t,r),\overline{V}(t,r))\in C^1$ in the region $\Omega_\eps$.
\end{lemma}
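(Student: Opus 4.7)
My plan is to reduce the lemma to the classical Cauchy theory for linear, strictly hyperbolic $2\times 2$ systems. Confining the problem to $\Omega_\eps$ removes every source of singularity from \eqref{3.8}: since $t\in[\eps,t_0]\subset(0,1)$, the denominator $F(t)=(1-t^2)(\kappa+1-t^2)$ is bounded below by $(1-t_0^2)(\kappa+1-t_0^2)>0$ and the a priori singular factor $1/t$ is bounded by $1/\eps$, so $\lambda(t)$ and every coefficient in \eqref{3.8} are $C^\infty$ and uniformly bounded on $\Omega_\eps$ while the two characteristic speeds $\pm\lambda(t)$ remain distinct.

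I would next verify that the Cauchy datum $\widehat{P'E'}$ is non-characteristic on $[\eps,t_0]$. Differentiating the parametrization \eqref{3.2} gives
\[
\fr{dr}{dt}\bigg|_{\widehat{P'E'}}=\fr{\cos\hat\omega(x)\,\varphi''(x)}{(1+(\varphi'(x))^2)\,\hat{\varpi}(x)\,\hat{\varpi}'(x)},
\]
and a short manipulation translates hypothesis \eqref{2.20} into the strict inequality $dr/dt>\lambda(t)$ along $\widehat{P'E'}$. So $\widehat{P'E'}$ is steeper than either characteristic direction, i.e.\ space-like. A simple geometric argument that uses this fact together with $\widehat{E'D'}$ being itself a $+$ characteristic then shows that from every $(t,r)\in\Omega_\eps$ both the $+$ and $-$ characteristics, traced forward in $t$, reach $\widehat{P'E'}$ before exiting the triangle; hence $\Omega_\eps$ lies in the domain of determinacy of the Cauchy data, with no need to prescribe anything on the auxiliary line $t=\eps$ or on $\widehat{E'D'}$.

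With these ingredients I would rewrite \eqref{3.8}--\eqref{3.9} as coupled integral equations by integrating $\pa_+\overline{U}$ along $+$ characteristics and $\pa_-\overline{V}$ along $-$ characteristics back to $\widehat{P'E'}$, and run the standard Picard iteration. Uniform convergence of the iterates together with their first derivatives follows from the Lipschitz boundedness of the coefficients and the $C^1$ regularity of $\bar{a},\bar{b}$, yielding $(\overline{U},\overline{V})\in C^1(\Omega_\eps)$. The only genuinely substantive step is the geometric verification in the second paragraph; everything else is a textbook use of the method of characteristics. The real difficulty of the paper, the degeneracy at $t=0$, has been deliberately sidestepped by the $\eps$-cutoff and will have to be treated by uniform-in-$\eps$ estimates and a limiting argument later in the section.
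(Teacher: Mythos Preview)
Your proposal is correct and matches the paper's approach exactly: the paper simply observes that \eqref{3.8} is linear and strictly hyperbolic on $\Omega_\eps$ and that $\widehat{P'E'}$ is space-like away from $P'$, then invokes the classical theory without further detail. Your write-up just fleshes out what the paper leaves implicit---the explicit check via \eqref{2.20} that $\tilde r'(t)>\lambda(t)$ and the sketch of the characteristic Picard iteration---so there is nothing to add or correct.
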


We next establish the uniform boundedness of the solution $(\overline{U}(t,r),\overline{V}(t,r))$ in $\Omega_\eps$.
\begin{lemma}\label{lem2}
The solution $(\overline{U}(t,r),\overline{V}(t,r))$ in Lemma \ref{lem1} satisfies
\begin{align}\label{3.12}
\fr{1}{2}\bar{m}_0\leq \overline{U}(t,r), \overline{V}(t,r)\leq 2e^{k_0}\bar{M}_0,\ \ \forall\ (t,r)\in\Omega_\eps,
\end{align}
where $k_0=(\kappa+2)/(\kappa-\kappa t_{0}^2)$.
\end{lemma}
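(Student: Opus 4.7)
My plan is a continuity (open--closed) argument combined with characteristic estimates that exploit the sign structure of the right-hand sides.

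First I would simplify \eqref{3.8} using the algebraic identity $F(t)+(\kappa+2-t^2)t^2 = \kappa+1$, which is immediate from $F=(1-t^2)(\kappa+1-t^2)$. This puts the system into the cleaner form
\begin{equation*}
\partial_+\overline{U} = \frac{\kappa+1}{2tF(t)}(\overline{U}-\overline{V}) - \frac{(\kappa+2-2t^2)t}{F(t)}\overline{U},
\end{equation*}
\begin{equation*}
\partial_-\overline{V} = -\frac{\kappa+1}{2tF(t)}(\overline{U}-\overline{V}) - \frac{(\kappa+2-2t^2)t}{F(t)}\overline{V}.
\end{equation*}
The right-hand side now visibly decomposes into an antisymmetric singular coupling $\pm\frac{\kappa+1}{2tF}(\overline{U}-\overline{V})$ and a bounded damping term $-\frac{(\kappa+2-2t^2)t}{F}$. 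A direct partial-fraction computation gives $\int_{t}^{t_0}\frac{(\kappa+2-2s^2)s}{F(s)}\,ds = \frac{1}{2}\ln\frac{F(t)}{F(t_0)}$, and this explicit integral, combined with the definition of $k_0=(\kappa+2)/(\kappa(1-t_0^2))$, is precisely what produces the factor $e^{k_0}$ in the target bound through the elementary estimate $\sqrt{F(t)/F(t_0)}\leq e^{k_0/2}$ on $[0,t_0]$.

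Next I would run a continuity argument, setting
\begin{equation*}
T^{\ast} = \inf\bigl\{\tau\in[\eps,t_0]\ :\ \tfrac{1}{2}\bar{m}_0\leq \overline{U}(t,r),\overline{V}(t,r)\leq 2e^{k_0}\bar{M}_0\ \text{for all}\ (t,r)\in\Omega_\eps\cap\{t\geq\tau\}\bigr\}.
\end{equation*}
The smoothness of the solution from Lemma \ref{lem1}, together with the boundary bounds \eqref{3.10}, gives $T^{\ast}<t_0$; the aim is $T^{\ast}=\eps$. Assume for contradiction $T^{\ast}>\eps$. On $\{t\geq T^{\ast}\}$ the bounds hold, so in particular $\vert\overline{U}-\overline{V}\vert$ is uniformly controlled. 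Integrating $\partial_+\overline{U}$ along a positive characteristic from $\widehat{P'E'}$ into the interior, the damping contribution is handled explicitly by the integral above, while at any point realising the supremum of $\overline{U}$ one has $\overline{V}\leq\overline{U}$, so the coupling has the favourable (dissipative) sign; a Gronwall iteration should then strictly improve the bound to $\overline{U},\overline{V}\leq e^{k_0}\bar{M}_0$, contradicting the definition of $T^{\ast}$. The lower bound $\overline{U},\overline{V}\geq \bar{m}_0/2$ is obtained by the mirror-image sign analysis at the first moment one of the two functions touches $\bar{m}_0/2$: the sign of $\overline{U}-\overline{V}$ at that point forces the coupling into the direction that prevents further decrease.

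The principal obstacle is the $1/t$ singularity in the coupling: a naive Gronwall on a single equation would yield only a logarithmically divergent bound as $\eps\to 0$. The rescue is the antisymmetry of the coupling -- summing the two equations gives
\begin{equation*}
\partial_+\overline{U}+\partial_-\overline{V} = -\frac{(\kappa+2-2t^2)t}{F(t)}(\overline{U}+\overline{V}),
\end{equation*}
in which the singular term cancels \emph{exactly}. Should the single-equation bootstrap above prove insufficient, the fallback is to first establish an $\eps$-independent $L^\infty$-bound on $\overline{U}+\overline{V}$ from this combined identity by a characteristic maximum principle (using the two integral representations along $C_+$ and $C_-$ simultaneously), and then use the positivity of the boundary data \eqref{3.10} together with continuity to split the sum bound into separate bounds on $\overline{U}$ and $\overline{V}$, which again produces the constant $2e^{k_0}\bar{M}_0$.
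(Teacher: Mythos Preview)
Your lower-bound argument is essentially the paper's: at the first point $A$ where, say, $\overline{U}$ drops to $\bar m_0/2$, one has $\overline{V}\geq\bar m_0/2\geq\overline{U}$, so every term on the right of the $\partial_+\overline{U}$ equation is nonpositive and the damping term is strictly negative, contradicting $\partial_+\overline{U}|_A\geq 0$. Fine.

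The upper-bound sketch, however, has a real gap. Your claim that ``at any point realising the supremum of $\overline{U}$ one has $\overline{V}\leq\overline{U}$, so the coupling has the favourable (dissipative) sign'' points in the \emph{wrong} direction: when $\overline{U}\geq\overline{V}$ the coupling term $+\frac{\kappa+1}{2tF}(\overline{U}-\overline{V})$ in your simplified $\partial_+\overline{U}$ equation is \emph{nonnegative}, so at the putative first maximum the equation does not force $\partial_+\overline{U}<0$; there is no pointwise contradiction analogous to Step~I. Nor can you rescue this by integrating along the $C_+$-characteristic and invoking Gronwall: you only know the sign of $\overline{U}-\overline{V}$ at the single point $A$, not along the whole characteristic, and the $1/t$ factor makes $\int_{t_A}^{t_C}\frac{1}{sF(s)}|\overline{U}-\overline{V}|\,ds$ diverge like $\log(1/t_A)$ even under the bootstrap hypothesis. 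Your fallback identity $\partial_+\overline{U}+\partial_-\overline{V}=-\frac{(\kappa+2-2t^2)t}{F}(\overline{U}+\overline{V})$ is correct but is \emph{not} a transport equation for $\overline{U}+\overline{V}$: written out it reads $\partial_t(\overline{U}+\overline{V})+\lambda\,\partial_r(\overline{U}-\overline{V})=\cdots$, so the uncontrolled term $\partial_r(\overline{U}-\overline{V})$ reappears and no maximum principle for the sum follows.

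What the paper does instead is introduce the weights $\widetilde U=e^{k_0 t}\overline{U}$, $\widetilde V=e^{k_0 t}\overline{V}$, with $k_0$ chosen so that $k_0>\frac{(\kappa+2-2t^2)t}{F}$ on $[0,t_0]$; then $\partial_+\widetilde U=G+(\text{pos.})\widetilde U=:H_1$ and $\partial_-\widetilde V=-G+(\text{pos.})\widetilde V=:H_2$ with $G=\bigl(\tfrac{1}{2t}+\tfrac{(\kappa+2-t^2)t}{2F}\bigr)(\widetilde U-\widetilde V)$. From an interior point one follows the $C_+$-characteristic while $H_1\geq 0$ (so $\widetilde U$ is monotone toward the boundary); at the first zero of $H_1$ one deduces $G<0$, hence $\widetilde V>\widetilde U$ there and $H_2>0$, and switches to the $C_-$-characteristic for $\widetilde V$. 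Iterating this zigzag either terminates on $\widehat{P'E'}$ in finitely many steps, or produces an accumulation point which (using the strict positivity from Step~I) must lie on $\widehat{P'E'}$; either way the boundary values give $\widetilde U,\widetilde V\leq 2e^{k_0}\bar M_0$. This switching mechanism is the missing idea in your proposal.
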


\begin{proof}
The proof consists of  two steps.

\textbf{Step I. The lower bound of $(\overline{U}(r,t),\overline{V}(r,t))$.} We claim that the solution $(\overline{U}(t,r),\overline{V}(t,r))$ satisfies
\begin{align}\label{3.14}
\overline{U}(t,r), \overline{V}(t,r)>\fr{1}{2}\bar{m}_0.
\end{align}
To show this, it is note that \eqref{3.14} holds in a small neighborhood of the point $E'$. We move the level set of  $t$ from $t=t_{E'}$ to $t=0$. Assume that the point $A$ is the first time  so that either 
$\overline{U}=\bar{m}_0/2$ or $\overline{V}=\bar{m}_0/2$ in the closed region bounded by $\widehat{E'P'}$, $\widehat{E'D'}$ and $t=t_A$. From the point $A$, we draw the negative and positive characteristic curves up to
the boundary $\widehat{P'E'}$ at points $B$ and $C$, respectively. We suppose without the loss of generality that $\overline{U}=\bar{m}_0/2$ at $A$ and then $\overline{U}>\bar{m}_0/2$, $\overline{V}>\bar{m}_0/2$ hold on $\widehat{AC}\setminus\{A\}$.
By the smoothness of $\overline{U}$, there should be $\pa_+\overline{U}\geq0$ at $A$. However, we check by the equation for $\overline{U}$ in \eqref{3.8} that
\begin{align*}
\pa_+\overline{U}|_{A}\leq -\fr{\kappa+2-2t_{A}^2}{F(t_A)}\cdot\fr{\bar{m}_0}{2}t_A<0,
\end{align*}
which yields a contradiction. Thus the inequalities in \eqref{3.14} are valid in $\Omega_\eps$ for any number $\eps>0$.

\textbf{Step II. The upper bound of $(\overline{U}(t,r),\overline{V}(t,r))$.} Denote
\begin{align}\label{3.15}
\widetilde{U}=e^{k_0t}\overline{U},\quad \widetilde{V}=e^{k_0t}\overline{V}.
\end{align}
Then the equations for $(\widetilde{U},\widetilde{V})$ are
\begin{align}\label{3.16}
\left\{
\begin{array}{l}
\dps \pa_+\widetilde{U}
=G(t,r) +\bigg(k_0-\fr{(\kappa+2-2t^2)t}{F}\bigg)\widetilde{U}:=H_1(t,r),\\[8pt]
\dps \pa_-\widetilde{V}
=-G(t,r) +  \bigg(k_0-\fr{(\kappa+2-2t^2)t}{F}\bigg)\widetilde{V}:=H_2(t,r),
\end{array}
\right.
\end{align}
where
$$
G(t,r)=\bigg(\fr{1}{2t}+\fr{(\kappa+2-t^2)t}{2F}\bigg)(\widetilde{U}-\widetilde{V}).
$$
It follows from the chosen of $k_0$ that
$$
k_0-\fr{(\kappa+2-2t^2)t}{F}>0.
$$
We now establish the upper bound of $(\widetilde{U},\widetilde{V})$. For a point $A$ in $\Omega_\eps$, let's assume $\widetilde{U}\geq\widetilde{V}$ at $A$. Otherwise the proof can be derived symmetrically. From $A$, we draw the positive characteristic curve up to the boundary $\widehat{P'E'}$ at point $C$. See Fig. 3. We divide the proof into two cases.

\noindent \textbf{Case 1. $H_1(t,r)\geq0$ always holds on $\widehat{AC}$.} For this case, we observe that
$$
\pa_+\widetilde{U}\geq0,
$$
is valid on $\widehat{AC}$, which implies that $\widetilde{U}$ is an increasing function. Therefore, we obtain
\begin{align}\label{3.17}
\widetilde{V}|_A\leq\widetilde{U}|_A\leq\widetilde{U}|_C= e^{k_0t_C}\overline{U}|_C\leq e^{k_0}\bar{M}_0.
\end{align}

\begin{figure}[htbp]
\begin{center}
\includegraphics[scale=0.5]{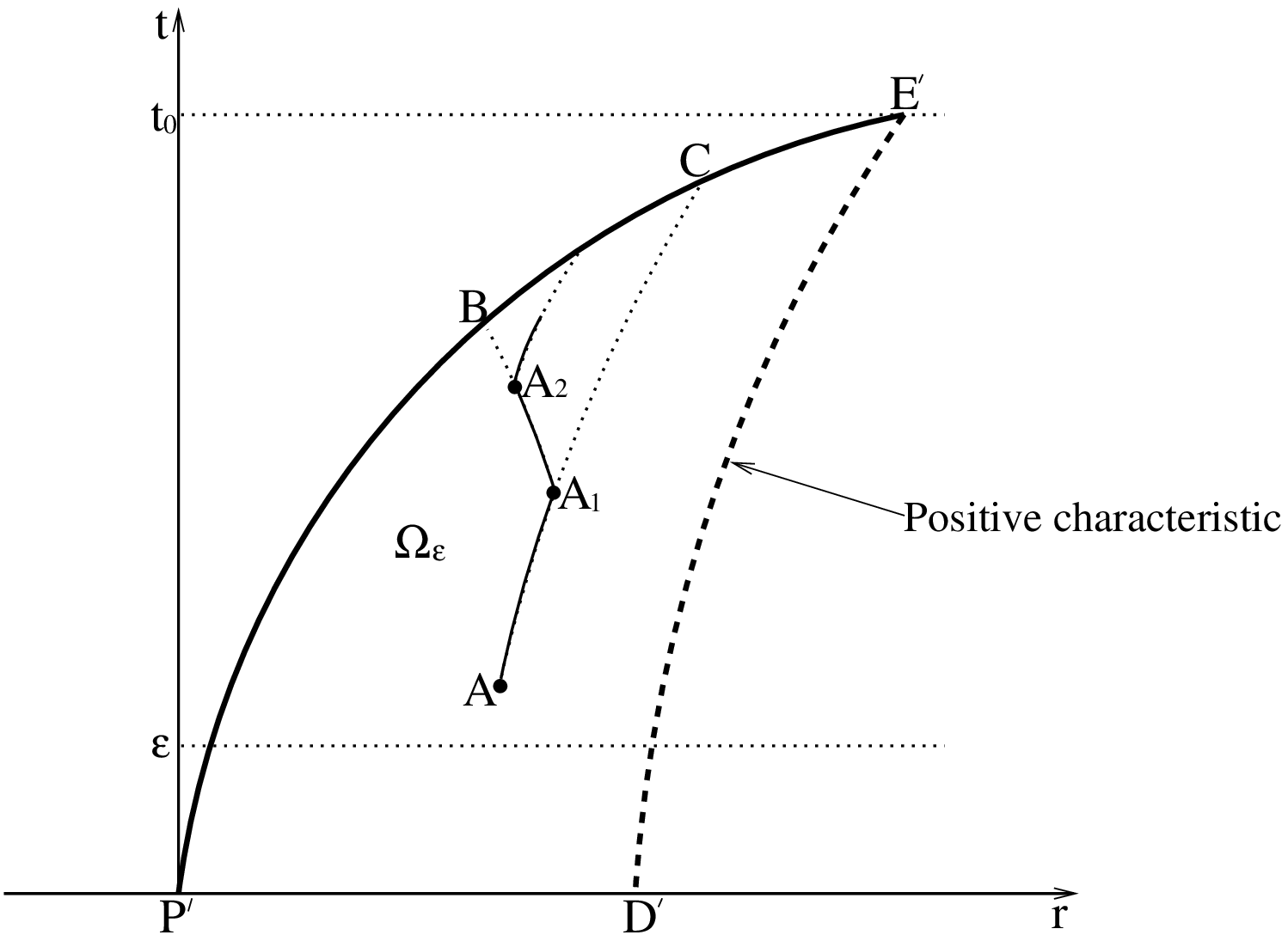}
\caption{\footnotesize The region $\Omega_\eps$.}
\end{center}
\end{figure}

\noindent \textbf{Case 2. $H_1(t,r)<0$ holds at some points on $\widehat{AC}$.} Note that $H_1>0$ at $A$. We can choose a point $A_1$ on $\widehat{AC}$ such that $H_1|_{A_1}=0$ and $H_1>0$ on $\widehat{AA_1}$. Then the function $\widetilde{U}$ is increasing along $\widehat{AA_1}$, from which one gets
\begin{align}\label{3.18}
\widetilde{V}|_A\leq\widetilde{U}|_A\leq\widetilde{U}|_{A_1}.
\end{align}
Due to $H_1=0$ at point $A_1$, we find that $G|_{A_1}<0$ by Step I and then $\widetilde{U}|_{A_1}<\widetilde{V}|_{A_1}$, which together with \eqref{3.18} gives
\begin{align}\label{3.19}
\widetilde{V}|_A\leq\widetilde{U}|_A\leq\widetilde{U}|_{A_1}<\widetilde{V}|_{A_1}.
\end{align}
Moreover, it is easily seen that $H_2>0$ at point $A_1$. We now draw the negative characteristic curve from $A_1$ up to the boundary $\widehat{P'E'}$ at point $B$, as in Fig. 3. If $H_2\geq0$ always holds on the curve $\widehat{A_1B}$, one has $\pa_-\widetilde{V}>0$ which indicates that $\widetilde{V}$ is an increasing function along $\widehat{A_1B}$ and then there holds $\widetilde{V}_{A_1}\leq\widetilde{V}_{B}$. Combining the above and \eqref{3.19} leads to
\begin{align}\label{3.20}
\widetilde{V}|_A\leq\widetilde{U}|_A\leq\widetilde{U}|_{A_1} <\widetilde{V}|_{A_1}\leq\widetilde{V}_{B}=e^{k_0t_{B}}\overline{V}_{B}\leq e^{k_0}\bar{M}_0.
\end{align}
If there exists a point at which $H_2<0$, then in view of the fact $H_2>0$ at point $A_1$ we can choose a point $A_2$ on $\widehat{A_1B}$ such that $H_2=0$ at $A_2$ and $H_2\geq0$ on $\widehat{A_1A_2}$. Now the function $\widetilde{V}$ is increasing along $\widehat{A_1A_2}$ and then
$$
\widetilde{V}|_A\leq\widetilde{U}|_A\leq\widetilde{U}|_{A_1} <\widetilde{V}|_{A_1}\leq\widetilde{V}_{A_2},
$$
which, combined with the fact $(\widetilde{U}-\widetilde{V})|_{A_2}>0$ following from $H_2=0$ at $A_2$,   yields
$$
\widetilde{V}|_A\leq\widetilde{U}|_A\leq\widetilde{U}|_{A_1} <\widetilde{V}|_{A_1}\leq\widetilde{V}_{A_2}<\widetilde{U}_{A_2}.
$$
Furthermore, we notice that $H_1>0$ at point $A_2$. Then we can repeat the above process as point $A_1$ to obtain
\begin{align}\label{3.20a}
\widetilde{U}|_A, \widetilde{V}|_A\leq 2e^{k_0}\bar{M}_0.
\end{align}
In fact, the above repeating process includes two subcases: Subcase (I) the process can be completed in finite steps; Subcase (II) the process can not be completed in finite steps. For the Subcase (I), it is easily seen that \eqref{3.20a} holds. For the Subcase (II), by the construction, there exists a sequence of points $(A_k)_{k=1}^\infty$ such that $H_1>0, H_2=0$ or $H_1=0, H_2>0$ hold at these points. Since the region is bounded, there exists an accumulation point $A_\infty$ such that $H_1\geq0, H_2\geq0$ at the point $A_\infty$. We claim that the point $A_\infty$ must be on the boundary $\widehat{P'E'}$. If not, then it follows by the continuity that $H_1(A_\infty)=H_2(A_\infty)=0$, from which and the expressions of $H_1, H_2$ in \eqref{3.16} one has $\widetilde{U}(A_\infty)=\widetilde{V}(A_\infty)=0$ and then $\overline{U}(A_\infty)=\overline{V}(A_\infty)=0$ by \eqref{3.15} which contradicts to \eqref{3.14}. Since $A_\infty$ is on $\widehat{P'E'}$, then $\widetilde{U}(A_\infty),\widetilde{V}(A_\infty)\leq e^{k_0}\bar{M}_0$ by the boundary conditions in \eqref{3.10}. Thanks to the continuity, there exists a small number $\delta>0$ such that $\widetilde{U},\widetilde{V}\leq 2e^{k_0}\bar{M}_0$ for any point in the region $O(A_\infty, \delta)\cap P'E'D'$, where $O(A_\infty, \delta)$ represents the $\delta$-neighborhood of point $A_\infty$. It is clear that there exists a positive integer $N$ such that the point $A_N$ is in the region $O(A_\infty, \delta)\cap P'E'D'$. Hence we have
\begin{align*}
\widetilde{U}|_A, \widetilde{V}|_A\leq \widetilde{U}|_{A_N}\leq 2e^{k_0}\bar{M}_0, \ \ {\rm or}\ \ \widetilde{U}|_A, \widetilde{V}|_A\leq \widetilde{V}|_{A_N}\leq 2e^{k_0}\bar{M}_0,
\end{align*}
which lead to \eqref{3.20a}.

Combining with \eqref{3.15} and \eqref{3.20a} arrives at
$$
\overline{U}|_A, \overline{V}|_A\leq e^{-k_0t_{A}}\cdot 2e^{k_0}\bar{M}_0\leq 2e^{k_0}\bar{M}_0,
$$
which completes the proof the lemma.
\end{proof}

\subsection{Properties of solutions}

We discuss in this subsection the properties of solutions near the degenerate line $\widehat{P'D'}$.

It is noted that the right-hand terms in \eqref{3.8} include a singular term $\overline{W}:=(\overline{U}-\overline{V})/t$. In order to extend the solution up to $t=0$, we need to establish the uniform boundedness of $\overline{W}$. We first derive the boundary data $\overline{W}$ on $\widehat{P'E'}$. According to the definitions of $(U,V)$, we find that the term $(U+V)/(2t)$ in the $t$-$r$ plane corresponds to the term $\bar\pa^0\Xi$ in the $x$-$y$ plane. Thus one has,  in light of  \eqref{2.23a}, \eqref{3.3} and \eqref{3.7},
\begin{align}\label{3.21}
\overline{W}|_{\widehat{P'E'}}=\fr{2}{UV}\bigg|_{\widehat{P'E'}}\cdot\fr{U+V}{2t}\bigg|_{\widehat{P'E'}} =\fr{2\hat{d}(r)}{\hat{a}(r)\hat{b}(r)},
\end{align}
which together with \eqref{3.4} acquires that $\overline{W}$ is uniformly bounded on the boundary curve $\widehat{P'E'}$. We next show that $\overline{W}$ is also uniformly bounded in the region $P'E'D'$ including the line $t=0$. Making use of \eqref{3.8}, one can easily obtain the equations for $\overline{W}$
\begin{align}\label{3.22}
\dps\pa_+\overline{W}=\fr{t^2}{F}(\overline{U}-\overline{V})-\fr{2\sqrt{1-t^2}}{F}t\overline{V}_r.
\end{align}
It can be seen from \eqref{3.22} that in order to acquire the boundedness of $\overline{W}$, we only need to estimate the term $t\overline{V}_r$. We comment that the terms $\overline{U}_r$ and $\overline{V}_r$ may not be uniformly bounded due to the degeneracy. Denote
\begin{align}\label{3.23}
R=t\overline{U}_r, \quad S=t\overline{V}_r.
\end{align}
Then by \eqref{3.8} the functions $(R, S)$  satisfy
\begin{align}\label{3.24}
\left\{
\begin{array}{l}
\dps \pa_+R
=\fr{3}{2}\fr{R}{t}-\bigg(\fr{1}{2t}+\fr{(\kappa+2-t^2)t}{2F}\bigg)S+\fr{(3t^2-\kappa-2)t}{2F}R, \\[8pt]
\dps \pa_-S
=\fr{3}{2}\fr{S}{t}-\bigg(\fr{1}{2t}+\fr{(\kappa+2-t^2)t}{2F}\bigg)R+\fr{(3t^2-\kappa-2)t}{2F}S,
\end{array}
\right.
\end{align}
from which one has
\begin{align}\label{3.25}
\left\{
\begin{array}{l}
\dps \pa_+(t^{-\fr{3}{2}}R)
=-\bigg(\fr{1}{2}+\fr{(\kappa+2-t^2)t^2}{2F}\bigg)t^{-\fr{5}{2}}S+\fr{(3t^2-\kappa-2)t^2}{2F}t^{-\fr{5}{2}}R, \\[8pt]
\dps \pa_-(t^{-\fr{3}{2}}S)
=-\bigg(\fr{1}{2}+\fr{(\kappa+2-t^2)t^2}{2F}\bigg)t^{-\fr{5}{2}}R+\fr{(3t^2-\kappa-2)t^2}{2F}t^{-\fr{5}{2}}S.
\end{array}
\right.
\end{align}
Set
$$
\eps_0=\min\bigg\{t_0,\ \fr{1}{4k_0}\bigg\}.
$$
Then the region $P'E'D'$ is divided into two parts $\Omega_1:=P'E'D'\cap\{t\leq\eps_0\}$ and $\Omega_2:=P'E'D'\cap\{t\geq\eps_0\}$. It is obvious that we just only need to show the boundedness of $\overline{W}$ in $\Omega_1$. By the chosen of $\eps_0$, we know that for $t\leq\eps_0$
\begin{align}\label{3.26}
\fr{1}{2}+\bigg|\fr{(\kappa+2-t^2)t^2}{2F}\bigg|\leq \fr{5}{8},\quad \bigg|\fr{(\kappa+2-3t^2)t^2}{2F}\bigg|\leq\fr{1}{8}.
\end{align}
Let
\begin{align}\label{3.27}
\overline{M}=1+2\max\bigg\{\max_{\Omega_2}\big\{|R|, |S|\big\},\ \max_{\widehat{P'E'}}\big\{|R|, |S|\big\}\bigg\}.
\end{align}
The term $\max_{\widehat{P'E'}}\{|R|, |S|\}$ is bounded by the assumptions in \eqref{3.10}.
Hence $\overline{M}$ is a uniformly bounded constant. We have the following lemma.

\begin{lemma}\label{lem3}
For any point $(t,r)\in\Omega_1$, there hold
\begin{align}\label{3.28}
|R|< \overline{M}, \quad |S|< \overline{M}.
\end{align}
\end{lemma}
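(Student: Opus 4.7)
The plan is a bootstrap-by-contradiction argument using the desingularized form (3.25) together with the sharp coefficient bounds (3.26) that are available on $\Omega_1$.

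Suppose the bound fails somewhere in $\Omega_1$. Using the continuity of $R,S$ supplied by Lemma \ref{lem1} and the strictly smaller bound $\max\{|R|,|S|\}\leq(\overline{M}-1)/2$ on $\Omega_2$ and on $\widehat{P'E'}$ dictated by (3.27), I would select a ``first violation'' point $A^*=(t^*,r^*)\in\Omega_1$: a point at which $|R(A^*)|=\overline{M}$ or $|S(A^*)|=\overline{M}$, while $|R|,|S|\leq\overline{M}$ throughout $\Omega_1\cap\{s\geq t^*\}$ and strictly less for $s>t^*$. Because the boundary bounds are strictly below $\overline{M}/2$, the point $A^*$ lies off $\widehat{P'E'}$ and off $\{t=\eps_0\}$. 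I would assume without loss of generality that $|R(A^*)|=\overline{M}$; the case $|S(A^*)|=\overline{M}$ is symmetric after exchanging the roles of $\pa_+$ and $\pa_-$.

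From $A^*$ I would trace the positive characteristic forward in $t$. Since $\widehat{P'E'}$ is space-like (its slope lies strictly between $-\lambda(t)$ and $\lambda(t)$) and $\widehat{E'D'}$ is itself a positive characteristic, the arc must exit $\{t\leq\eps_0\}\cap P'E'D'$ at some point $C=(t_C,r_C)$ lying either on $\widehat{P'E'}\cap\Omega_1$ or on $\{t=\eps_0\}\cap\overline{\Omega_2}$, at which $|R(C)|\leq(\overline{M}-1)/2<\overline{M}/2$. Along the arc $s\in[t^*,t_C]$ one has $|R|,|S|\leq\overline{M}$ by the choice of $A^*$.

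The decisive step is the integration of the first equation of (3.25) along this arc. The bounds (3.26), valid throughout $\Omega_1$, yield
\begin{align*}
\bigl|\pa_+\bigl(s^{-3/2}R\bigr)\bigr|\leq \tfrac{5}{8}\,s^{-5/2}|S|+\tfrac{1}{8}\,s^{-5/2}|R|\leq \tfrac{3}{4}\overline{M}\,s^{-5/2}.
\end{align*}
Integrating from $t^*$ to $t_C$ and using $\int_{t^*}^{t_C}s^{-5/2}\,ds=\tfrac{2}{3}\bigl((t^*)^{-3/2}-t_C^{-3/2}\bigr)$ produces
\begin{align*}
(t^*)^{-3/2}|R(A^*)|\leq t_C^{-3/2}|R(C)|+\tfrac{1}{2}\overline{M}\bigl((t^*)^{-3/2}-t_C^{-3/2}\bigr)\leq \tfrac{1}{2}\overline{M}(t^*)^{-3/2},
\end{align*}
where at the last step I insert $|R(C)|\leq\overline{M}/2$. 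Hence $|R(A^*)|\leq\overline{M}/2<\overline{M}$, contradicting the choice of $A^*$.

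The main obstacle is the singular zeroth-order term $3R/(2t)$ in (3.24), which blocks any direct maximum-principle or Gronwall argument on $R$ and $S$ themselves. The substitution $R\mapsto t^{-3/2}R$ leading to (3.25) removes exactly this singularity at the cost of an $s^{-5/2}$ kernel. The choice $\eps_0\leq 1/(4k_0)$ tunes the resulting coefficients to sum to $3/4$ in (3.26), and the identity $\tfrac{3}{4}\cdot\tfrac{2}{3}=\tfrac{1}{2}$ is precisely what allows the bootstrap to close; any appreciable worsening of these constants would prevent the argument from improving upon the a priori upper bound $\overline{M}$.
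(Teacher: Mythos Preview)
Your proposal is correct and follows essentially the same argument as the paper's proof: both set up a first-violation point, integrate the desingularized equation \eqref{3.25} along the positive characteristic back to $\widehat{P'E'}$ or to $\{t=\eps_0\}$, invoke the coefficient bounds \eqref{3.26}, and use the boundary bound $|R(C)|\leq\overline{M}/2$ (which follows from \eqref{3.27}) to force $|R(A^*)|\leq\overline{M}/2$, a contradiction. Your observation that the constants $\tfrac{3}{4}\cdot\tfrac{2}{3}=\tfrac{1}{2}$ are exactly what closes the bootstrap matches the paper's computation \eqref{3.29} line for line.
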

\begin{proof}
The lemma is also proved by the contradiction argument. We note that \eqref{3.28} is valid in a small neighborhood of the segment $P'E'D'\cap\{t=\eps_0\}$ and then move the level set of  $t$ from $t=t_0$ to $t=0$. Suppose that the point $A$ is the first time so that either $|R(t_A, r_A)|=\overline{M}$ or  $|S(t_A, r_A)|=\overline{M}$ in the closed region bounded by $t=t_0$, $\widehat{P'E'}$, $\widehat{D'E'}$ and $t=t_A$. From the point $A$, we draw the negative and positive characteristic curves up to the boundary of $\Omega_1$ at points $B(t_B, r_B)$ and $C(t_C, r_C)$, respectively.
Without the loss of generality, we assume that $|R(t_A, r_A)|=\overline{M}$ and $|R(t,r)|\leq\overline{M}$, $|S(t,r)|\leq\overline{M}$ hold on the positive characteristic curve $\widehat{AC}$.
According to the definition of $\overline{M}$, we obtain that
$$
|R(t_B, r_B)|,\ |S(t_B, r_B)|,\ |R(t_C, r_C)|,\ |S(t_C, r_C)|<\overline{M}.
$$
Then we integrate the equation for $R$ in \eqref{3.25} from $A$ to $C$ to get
\begin{align*}
&t_{C}^{-\fr{3}{2}}R(t_C, r_C)-t_{A}^{-\fr{3}{2}}R(t_A, r_A)  \\
=&\int_{t_A}^{t_C}\bigg\{-\bigg(\fr{1}{2}+\fr{(\kappa+2-t^2)t^2}{2F}\bigg)t^{-\fr{5}{2}}S +\fr{(3t^2-\kappa-2)t^2}{2F}t^{-\fr{5}{2}}R\bigg\}\ {\rm d}t,
\end{align*}
from which and \eqref{3.26} we further obtain
\begin{align}\label{3.29}
|R(t_A, r_A)|\leq & t_{A}^{\fr{3}{2}}\bigg\{ t_{C}^{-\fr{3}{2}}|R(t_C, r_C)| + \int_{t_A}^{t_C}\bigg(\fr{5}{8}t^{-\fr{5}{2}}|S| +\fr{1}{8}t^{-\fr{5}{2}}|R|\bigg)\ {\rm d}t\bigg\} \nonumber \\
\leq & t_{A}^{\fr{3}{2}}\bigg\{ t_{C}^{-\fr{3}{2}}|R(t_C, r_C)| + \int_{t_A}^{t_C}\bigg(\fr{5}{8}t^{-\fr{5}{2}}\overline{M} +\fr{1}{8}t^{-\fr{5}{2}}\overline{M}\bigg)\ {\rm d}t\bigg\} \nonumber \\
\leq & t_{A}^{\fr{3}{2}}\bigg\{ t_{C}^{-\fr{3}{2}}|R(t_C, r_C)| + \fr{3}{4}\overline{M}\int_{t_A}^{t_C}t^{-\fr{5}{2}}\ {\rm d}t\bigg\} \nonumber \\
\leq & t_{A}^{\fr{3}{2}}\bigg\{ t_{C}^{-\fr{3}{2}}\bigg(|R(t_C, r_C)|-\fr{1}{2}\overline{M}\bigg) + \fr{1}{2}\overline{M}t_{A}^{-\fr{3}{2}}\bigg\}\leq \fr{1}{2}\overline{M}<\overline{M}.
\end{align}
This  contradicts to the assumption $|R(t_A, r_A)|=\overline{M}$. Here we have used the fact
$$
|R(t_C, r_C)|-\fr{1}{2}\overline{M}\leq0,
$$
which follows from the chosen of $\overline{M}$ in \eqref{3.27}. The proof of the lemma is completed.
\end{proof}

Based on Lemma \ref{lem3} and \eqref{3.22}, we have the following lemma.
\begin{lemma}\label{lem4}
The function $\overline{W}$ is uniformly bounded up to the degenerate line $\widehat{P'D'}$.
\end{lemma}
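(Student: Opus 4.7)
The plan is to bound $\overline{W}$ by integrating the transport equation \eqref{3.22} along positive characteristics, using the trace of $\overline{W}$ on $\widehat{P'E'}$ together with the uniform bounds already established in Lemmas \ref{lem2} and \ref{lem3}. The point is that with those two lemmas in hand, every quantity appearing on the right-hand side of \eqref{3.22} is uniformly controlled.

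First I would record the easy bound. The rational factors $t^2/F(t)$ and $\sqrt{1-t^2}/F(t)$ remain bounded on $[0,t_0]$ because $F(t)=(1-t^2)(\kappa+1-t^2)\geq\kappa(1-t_0^2)>0$; the difference $\overline{U}-\overline{V}$ is bounded by Lemma \ref{lem2}; and $t\overline{V}_r=S$ is bounded by Lemma \ref{lem3}. Combining these yields $|\pa_+\overline{W}|\leq K$ for a uniform constant $K$. I would also observe that the trace $\overline{W}|_{\widehat{P'E'}}=2\hat{d}(r)/[\hat{a}(r)\hat{b}(r)]$ from \eqref{3.21} is uniformly bounded via the $L^\infty$ bounds in \eqref{3.4}.

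Next, for any $A=(t_A,r_A)\in P'E'D'$, I would follow the positive characteristic through $A$ in the direction of increasing $t$ and show that it meets $\widehat{P'E'}$ at some point $C$ with $t_A<t_C\leq t_0$. Since $\lambda(t)$ depends only on $t$, every positive characteristic is a pure $r$-translate of $\widehat{E'D'}$; the inclusion $A\in P'E'D'$ gives $r_A\leq\check{r}(t_A)$, so the characteristic through $A$ reaches $t=t_0$ at $r$-coordinate $r_A+\int_{t_A}^{t_0}\lambda(s)\,ds\leq r_0$, with equality only when $A\in\widehat{E'D'}$ (in which case $C=E'$). In every case the characteristic exits the region through $\widehat{P'E'}$. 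Integrating \eqref{3.22} from $A$ to $C$ along this characteristic yields
\begin{equation*}
\overline{W}(A)=\overline{W}(C)-\int_{t_A}^{t_C}\pa_+\overline{W}\,dt,
\end{equation*}
so $|\overline{W}(A)|\leq|\overline{W}(C)|+Kt_0$, a uniform bound independent of $A$ that extends all the way up to the degenerate line $\widehat{P'D'}$.

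The genuine difficulty of Lemma \ref{lem4} was already cleared in Lemma \ref{lem3}, where the singular factor $1/t$ in the derivatives $\overline{U}_r,\overline{V}_r$ had to be controlled through the contradiction argument on the rescaled quantities $t^{-3/2}R$ and $t^{-3/2}S$. Once the bound on $S$ is available, the present step is a routine linear-ODE estimate along a $\pa_+$-characteristic; the only minor subtlety is the geometric verification that every positive characteristic issuing from an interior point reaches $\widehat{P'E'}$ in forward time.
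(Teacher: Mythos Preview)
Your proof is correct and follows essentially the same route as the paper: integrate \eqref{3.22} along the positive characteristic from an arbitrary point $A$ to its intersection $C$ with $\widehat{P'E'}$, then bound the integrand using Lemmas \ref{lem2} and \ref{lem3} and the trace value \eqref{3.21}. The only difference is cosmetic---you package the right-hand side into a single constant $K$ and supply a short justification that the positive characteristic through $A$ actually meets $\widehat{P'E'}$, whereas the paper writes out the explicit numerical bound and takes the geometric fact for granted.
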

\begin{proof}
For any point $A(t_A, r_A)$ in the region $P'E'D'$, we draw the positive characteristic curve up to the boundary curve $\widehat{P'E'}$ at point $C(t_C, r_C)$. Integrating the first equation of \eqref{3.22} from $A$ to $C$ gives
\begin{align}\label{3.31a}
\overline{W}(t_A, r_A)=\overline{W}( t_C, r_C) -\int_{t_A}^{t_C}\bigg\{\fr{t^2}{F}(\overline{U}-\overline{V})-\fr{2\sqrt{1-t^2}}{F}t\overline{V}_r\bigg\}\ {\rm d}t,
\end{align}
which along with \eqref{3.12}, \eqref{3.21} and \eqref{3.28} leads to
\begin{align}\label{3.31}
|\overline{W}(t_A, r_A)|\leq & |\overline{W}( t_C, r_C)| +\int_{t_A}^{t_C}\bigg\{\fr{t^2}{F}(|\overline{U}|+|\overline{V}|) +\fr{2\sqrt{1-t^2}}{F}|t\overline{V}_r|\bigg\}\ {\rm d}t \nonumber \\
\leq &\fr{2\hat{c}(r_C)}{\hat{a}(r_C)\hat{b}(r_C)} +\fr{2k_0t_0}{\kappa+2}(2e^{k_0}\bar{M}_0+\overline{M})<\infty.
\end{align}
This implies, due to the arbitrariness of point $A$,  that $\overline{W}$ is uniformly bounded. The proof of the lemma is complete.
\end{proof}

By the uniform boundedness of $\overline{W}$, we see that $\overline{U}=\overline{V}$ holds on the degenerate line $\widehat{P'D'}$. Making use of this fact, we now establish the uniform continuity of $\overline{U}, \overline{V}$ and $\overline{W}$.
\begin{lemma}\label{lem5}
The functions $\overline{U}, \overline{V}$ and $\overline{W}$ are uniformly $C^{\fr{1}{3}}$ continuous in the whole domain $P'E'D'$, including the degenerate line $\widehat{P'D'}$.
\end{lemma}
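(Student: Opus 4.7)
\textbf{Proof plan for Lemma \ref{lem5}.} The strategy is to combine Lipschitz estimates along characteristic directions, which follow from the transport equations in \eqref{3.8} and \eqref{3.22} together with Lemmas \ref{lem2}--\ref{lem4}, with transverse $r$-derivative estimates, which blow up mildly as $t\to 0$ (like $1/t$ for $\overline{U},\overline{V}$ and like $1/t^2$ for $\overline{W}$). For two close points $P_i=(t_i,r_i)$ I would transport both along positive characteristics to a common level $t=t^*$ and choose $t^*$ to balance the two sources of error. The quadratic blow-up of $\overline{W}_r$ is what caps the H\"older exponent at $1/3$.

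First I would collect the pointwise directional bounds. From the first equation of \eqref{3.8}, $\pa_+\overline{U}=\tfrac{1}{2}\overline{W}+\tfrac{(\kappa+2-t^2)t}{2F}(\overline{U}-\overline{V})-\tfrac{\kappa+2-2t^2}{F}\overline{U}\,t$, and Lemmas \ref{lem2} and \ref{lem4} (the latter giving $|\overline{W}|\leq C$, hence $|\overline{U}-\overline{V}|\leq Ct$) imply $|\pa_+\overline{U}|\leq C$; symmetrically $|\pa_-\overline{V}|\leq C$. Equation \eqref{3.22} combined with $|S|\leq\overline{M}$ (Lemma \ref{lem3}) and $|\overline{U}-\overline{V}|\leq Ct$ gives $|\pa_+\overline{W}|\leq C$. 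Lemma \ref{lem3} directly yields $|\overline{U}_r|,|\overline{V}_r|\leq\overline{M}/t$, and since $\overline{W}_r=(R-S)/t^2$ we obtain $|\overline{W}_r|\leq 2\overline{M}/t^2$. The symmetric bounds $|\pa_-\overline{U}|,|\pa_+\overline{V}|,|\pa_-\overline{W}|\leq C$ follow from $\pa_+-\pa_-=2\lambda(t)\pa_r$ with $\lambda(t)=O(t^2)$ near $t=0$.

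For the H\"older estimate, fix $P_1,P_2\in P'E'D'$, set $d=|t_1-t_2|+|r_1-r_2|$, and for a parameter $t^*\geq\max(t_1,t_2)$ transport each $P_i$ along the positive characteristic to $Q_i=(t^*,\,r_i+\int_{t_i}^{t^*}\lambda(s)\,ds)$. Since $\widehat{E'D'}$ is itself a positive characteristic foliating the region, $Q_i\in P'E'D'$ as long as $t^*\leq t_0$. The Lipschitz bound along $\pa_+$ gives $|f(P_i)-f(Q_i)|\leq C|t^*-t_i|\leq Ct^*$ for each $f\in\{\overline{U},\overline{V},\overline{W}\}$, while $|r_1^Q-r_2^Q|\leq|r_1-r_2|+C|t_1-t_2|\leq Cd$, so integrating the $r$-derivative at level $t^*$ yields $|f(Q_1)-f(Q_2)|\leq Cd/t^*$ for $f=\overline{U},\overline{V}$ and $|\overline{W}(Q_1)-\overline{W}(Q_2)|\leq Cd/(t^*)^2$. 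Optimizing over $t^*$ by choosing $t^*=\max\{\max(t_1,t_2),\,d^{1/2}\}$ for $\overline{U},\overline{V}$ and $t^*=\max\{\max(t_1,t_2),\,d^{1/3}\}$ for $\overline{W}$ balances the two contributions and produces $|f(P_1)-f(P_2)|\leq Cd^{1/2}$ for $f=\overline{U},\overline{V}$ and $|\overline{W}(P_1)-\overline{W}(P_2)|\leq Cd^{1/3}$, which proves uniform $C^{\fr{1}{3}}$-continuity up to $\widehat{P'D'}$.

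The main obstacle is the stronger $t^{-2}$ singularity of $\overline{W}_r$ (versus $t^{-1}$ for $\overline{U}_r,\overline{V}_r$), which is exactly what forces the H\"older exponent down from $1/2$ to $1/3$; no gain is possible without further cancellations. A subsidiary technical point is to ensure the lifted points $Q_i$ stay in $P'E'D'$: for points lying very close to the streamline $\widehat{P'E'}$ the positive characteristic may exit the domain before reaching level $t^*$, and in those cases one transports instead along the negative characteristic, using the symmetric bounds $|\pa_-\overline{U}|,|\pa_-\overline{V}|,|\pa_-\overline{W}|\leq C$ established above; the argument is otherwise unchanged.
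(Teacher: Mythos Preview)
Your argument is correct and genuinely different from the paper's. The paper does not lift both points along parallel characteristics to an auxiliary level $t^*$. Instead, for two points $(0,r_1)$ and $(0,r_2)$ on the degenerate line it sends a \emph{positive} characteristic from one and a \emph{negative} characteristic from the other until they meet at a single point $(t_m,r_m)$, uses the identity $\overline{U}=\overline{V}$ on $\{t=0\}$ to switch between the two unknowns, and bounds $\overline{V}(t_m,r_m)-\overline{U}(t_m,r_m)=-t_m\overline{W}(t_m,r_m)$ together with the $\pa_\pm$-integrals by $Ct_m$. Since $r_2-r_1=2\int_0^{t_m}\lambda\sim t_m^3$, this yields only $C^{1/3}$ for $\overline{U},\overline{V}$; the general case is reduced to the sonic line via the uniform bound on $\overline{U}_t$. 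For $\overline{W}$ the paper repeats the crossing construction and controls the difference at level $t_m$ by $|\overline{W}(t_m,r_m)-\overline{W}(t_m,\tilde r_m)|\leq 2\overline{M}\,|r_2-r_1|/t_m^2\leq Ct_m$ via Lemma~\ref{lem3}. Your parallel-lift route is more direct and in fact sharper for $\overline{U},\overline{V}$: balancing $Ct^*$ against $Cd/t^*$ gives $C^{1/2}$ rather than $C^{1/3}$, while for $\overline{W}$ both methods land on $C^{1/3}$.

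One caution: your proposed fix near the boundary---switching to the negative characteristic---does not work in a neighborhood of the corner $P'$. There $\tilde r(t)\sim t^2$, so from a point $(0,r_1)$ with small $r_1$ \emph{both} characteristic families exit through $\widehat{P'E'}$ at the same scale $t\sim r_1^{1/2}$, possibly before the optimal $t^*$. The correct remedy is to follow the characteristic until it meets $\widehat{P'E'}$ and then invoke the $C^1$ regularity of the boundary data \eqref{3.9}, \eqref{3.21} in $r$; this recovers the same H\"older exponent with no loss. (The paper's meeting-point construction faces the identical corner obstruction and handles it just as implicitly.)
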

\begin{proof}
We first show the the uniform continuity of $\overline{U}$.
For any two points $(0, r_1)$ and $(0, r_2)$ with $0<r_1<r_2\leq r_0$, we directly calculate
\begin{align}\label{3.32}
&\overline{U}(0, r_2)-\overline{U}(0, r_1)=\overline{V}(0, r_2)-\overline{U}(0, r_1) \nonumber \\
=&\overline{V}(t_m, r_m)-\overline{U}(t_m, r_m) \nonumber \\
&\ + \int_{0}^{t_m} \bigg\{ -\fr{\overline{W}}{2}-\fr{\kappa+2-t^2}{2F}(\overline{U}-\overline{V})t  -\fr{\kappa+2-2t^2}{F}\overline{V}t\bigg\}(t, r_-(t))\ {\rm d}t\nonumber \\[3pt]
&\ \ +\int_{0}^{t_m} \bigg\{ \fr{\overline{W}}{2}+\fr{\kappa+2-t^2}{2F}(\overline{U}-\overline{V})t -\fr{\kappa+2-2t^2}{F}\overline{U}t \bigg\}(t, r_+(t))\ {\rm d}t,
\end{align}
where $(t_m, r_m)$ and $r_\pm(t)$ are defined as
\begin{align}\label{3.33}
&\quad \ \  r_m=r_1+\int_{0}^{t_m}\lambda(t)\ {\rm d}t=r_2-\int_{0}^{t_m}\lambda(t)\ {\rm d}t,\\
&r_+(t)=r_1+\int_{0}^t\lambda(t)\ {\rm d}t,\quad r_-(t)=r_2-\int_{0}^t\lambda(t)\ {\rm d}t. \nonumber
\end{align}
With the aid of Lemmas \ref{lem2} and \ref{lem4}, it follows from \eqref{3.32} that
\begin{align}\label{3.34}
|\overline{U}(0, r_2)-\overline{U}(0, r_1)|\leq \widetilde{M}t_m
\end{align}
for some constant $\widetilde{M}>0$. Recalling the expression of $\lambda$ in \eqref{3.8a} yields
\begin{align}\label{3.34c}
\underline{k}t^2=:\fr{1}{\kappa+1}t^2\leq \lambda(t)\leq \fr{1}{\kappa\sqrt{1-t_{0}^2}}t^2=:\overline{k}t^2,
\end{align}
which together with \eqref{3.33} arrives at
\begin{align}\label{3.35}
\fr{2}{3}\underline{k}t_{m}^3\leq|r_2-r_1|\leq \fr{2}{3}\overline{k}t_{m}^3.
\end{align}
Combining with \eqref{3.34} and \eqref{3.35}, we obtain for some constant $M>0$
\begin{align}\label{3.36}
|\overline{U}(0, r_2)-\overline{U}(0, r_1)|\leq M|r_2-r_1|^{\fr{1}{3}}.
\end{align}

For any two points $(t_1, r_1)$ and $(0, r_2)$ with $t_1\leq t_0$ and $0<r_1<r_2\leq r_0$, we have
\begin{align}\label{3.37}
|\overline{U}(0, r_2)-\overline{U}(t_1, r_1)|\leq &|\overline{U}(0, r_2)-\overline{U}(0, r_1)| +|\overline{U}(0, r_1)-\overline{U}(t_1, r_1)| \nonumber \\
\leq & M|r_2-r_1|^{\fr{1}{3}} +Mt_1\leq M|(0, r_2)-(t_1, r_1)|^{\fr{1}{3}}
\end{align}
for some constant $M>0$. Here we used \eqref{3.36} and the fact that $\overline{U}_t$ is uniformly bounded. The uniform boundedness of $\overline{U}_t$ follows from \eqref{3.8}, \eqref{3.34c} and Lemma \ref{lem4}.
For any two points $(t_1, r_1)$ and $(t_2, r_2)$ in $P'E'D'$, we can obtain the similar inequality by inserting some suitable points and using the mean value theorem. Hence, the function $\overline{U}(t,r)$ is uniformly $C^{\fr{1}{3}}$ continuous in the whole domain $P'E'D'$. The uniform H\"older continuity of $\overline{V}(t,r)$ can be established in a similar way.

For the function $\overline{W}(t,r)$, we let $r_1, r_2, r_m$ as before and denote
$$
\tilde{r}_+(t)=r_2+\int_{0}^{t}\lambda(t)\ {\rm d}t,\quad \tilde{r}_m=\tilde{r}_+(t_m),
$$
which along with \eqref{3.33} gives $\tilde{r}_m-r_m=r_2-r_1$.
Moreover, recalling \eqref{3.22} finds that
\begin{align}\label{3.38}
\begin{array}{l}
\dps \overline{W}(0,r_1)=\overline{W}(t_m,r_m)-\int_{0}^{t_m} \bigg\{\fr{t^2}{F}(\overline{U}-\overline{V})-\fr{2\sqrt{1-t^2}}{F}S\bigg\}(t,r_+(t))\ {\rm d}t,  \\[8pt] \dps \overline{W}(0,r_2)=\overline{W}(t_m,\tilde{r}_m)-\int_{0}^{t_m} \bigg\{\fr{t^2}{F}(\overline{U}-\overline{V})-\fr{2\sqrt{1-t^2}}{F}S\bigg\}(t,\tilde{r}_+(t))\ {\rm d}t,
\end{array}
\end{align}
which are well-defined by Lemma \ref{lem3}. In view of the definition of $\overline{W}=(\overline{U}-\overline{V})/t$, Lemma \ref{lem3} and the inequality \eqref{3.35}, we obtain
\begin{align}\label{3.39}
|\overline{W}(t_m,r_m)- \overline{W}(t_m,\tilde{r}_m)|\leq&\fr{|\overline{U}(t_m,r_m)- \overline{U}(t_m,\tilde{r}_m)| +|\overline{V}(t_m,r_m)- \overline{V}(t_m,\tilde{r}_m)|}{t_m} \nonumber \\
\leq & \fr{2\overline{M}|r_m-\tilde{r}_m|}{t_{m}^2}=\fr{2\overline{M}|r_2-r_1|}{t_{m}^2} \leq \fr{4}{3}\overline{k}\cdot\overline{M}t_m.
\end{align}
Combining with \eqref{3.39} and \eqref{3.38} and applying Lemma \ref{lem3} again, one gets
\begin{align*}
&|\overline{W}(0,r_1)- \overline{W}(0,r_2)|\\
\leq&|\overline{W}(t_m,r_m)- \overline{W}(t_m,\tilde{r}_m)| +2\int_{0}^{t_m} \bigg\{\fr{t^2}{F}(|\overline{U}|+|\overline{V}|)+\fr{2\sqrt{1-t^2}}{F}|S|\bigg\}\ {\rm d}t  \\
\leq&\widetilde{M}t_m,
\end{align*}
for some constant $\widetilde{M}>0$. Repeating the same process as \eqref{3.34}-\eqref{3.37}, we acquire the  uniform $C^{\fr{1}{3}}$-continuity of $\overline{W}(t,r)$ in the whole domain $P'E'D'$. The proof of the lemma is finished.
\end{proof}

\section{Solutions in the physical plane}\label{s4}

In this section, we recover a global smooth supersonic-sonic solution to system \eqref{2.10} by expressing the solution in the partial hodograph plane back to that in the original physical plane.

\subsection{Inversion}

Thanks to \eqref{3.7} and \eqref{3.12}, we obtain the smooth solution $(U(t,r),V(t,r))$ of system \eqref{3.6} in the whole region $P'E'D'$. Now we pursue to construct the functions $x(t,r)$ and $y(t,r)$. Recalling the coordinate transformation \eqref{3.1} yields
$$
1=-\sin\omega(\omega_xx_t+\omega_yy_t),\quad 0=-\theta_xx_t-\theta_yy_t,
$$
from which one has
$$
x_t=\fr{\theta_y}{\sin\omega\omega_y\theta_x-\sin\omega\omega_x\theta_y},\quad y_t=-\fr{\theta_x}{\sin\omega\omega_y\theta_x-\sin\omega\omega_x\theta_y}.
$$
Similarly, it follows that
$$
x_r=\fr{\omega_y}{\omega_y\theta_x-\omega_x\theta_y},\quad y_r=-\fr{\omega_x}{\omega_y\theta_x-\omega_x\theta_y}.
$$
Making use of \eqref{2.8} and \eqref{2.13} gives
\begin{align}\label{4.3}
J=:\sin\omega\omega_y\theta_x-\sin\omega\omega_x\theta_y &=-\sin\omega(\bar{\pa}^+\omega\bar{\pa}^-\Xi+\bar{\pa}^-\omega\bar{\pa}^+\Xi) \nonumber \\
&=\fr{-4F(t)}{t}U(t,r)V(t,r).
\end{align}
Thus we have
\begin{align}\label{4.1}
\fr{\pa x}{\pa t}=\fr{\theta_y}{J},\quad
\fr{\pa y}{\pa t}=-\fr{\theta_x}{J},\quad
\fr{\pa x}{\pa r}=\fr{\sin\omega\omega_y}{J},\quad
\fr{\pa y}{\pa r}=-\fr{\sin\omega\omega_x}{J},
\end{align}
where the functions $(\theta_x, \theta_y, \omega_x, \omega_y)$ in terms of $(t,r)$ by \eqref{2.8}, \eqref{2.12} and \eqref{2.13} are
\begin{align}\label{4.2}
\begin{array}{l}
\theta_x=F_1(t,r)U(t,r)+F_2(t,r)V(t,r), \\
\theta_y=-F_3(t,r)U(t,r)-F_4(t,r)V(t,r), \\[4pt]
\dps\omega_x=-\fr{\kappa+1-t^2}{t^2}[F_1(t,r)U(t,r) -F_2(t,r)V(t,r)],\\[4pt]
\dps\omega_y= \fr{\kappa+1-t^2}{t^2}[F_3(t,r)U(t,r) -F_4(t,r)V(t,r)].
\end{array}
\end{align}
Here the expressions of functions $F_i(t,r)\ (i=1,2,3,4)$ are
\begin{align*}
F_1(t,r)&=t\sin(\hat{\theta}_1-r)-\sqrt{1-t^2}\cos(\hat{\theta}_1-r), \\
F_2(t,r)&=t\sin(\hat{\theta}_1-r) +\sqrt{1-t^2}\cos (\hat{\theta}_1-r), \\
F_3(t,r)&=t\cos(\hat{\theta}_1-r)+\sqrt{1-t^2}\sin(\hat{\theta}_1-r), \\
F_4(t,r)&=t\cos(\hat{\theta}_1-r) -\sqrt{1-t^2}\sin (\hat{\theta}_1-r).
\end{align*}
Thus we get by \eqref{4.1}-\eqref{4.2}
\begin{align}\label{4.4}
x_t=\dps\fr{F_3(t,r)U(t,r)+F_4(t,r)V(t,r)}{4F(t)U(t,r)V(t,r)}t, \quad
y_t=\dps\fr{F_1(t,r)U(t,r)+F_2(t,r)V(t,r)}{4F(t)U(t,r)V(t,r)}t,
\end{align}
and
\begin{align}\label{4.5}
x_r=\dps\fr{-F_3(t,r)U(t,r)+F_4(t,r)V(t,r)}{4t\sqrt{1-t^2}U(t,r)V(t,r)},  \quad
y_r=\dps\fr{-F_1(t,r)U(t,r)+F_2(t,r)V(t,r)}{4t\sqrt{1-t^2}U(t,r)V(t,r)}.
\end{align}
It follows from \eqref{4.4} and \eqref{4.5} that
\begin{align}\label{4.6}
\dps\fr{{\rm d}x(t, r_+(t))}{{\rm d}t}=\fr{F_4(t,r)}{2F(t)U(t,r)}t,\quad
\dps\fr{{\rm d}y(t, r_+(t))}{{\rm d}t}=\fr{F_2(t,r)}{2F(t)U(t,r)}t,
\end{align}
where the function $r_+(t)$ is defined by
$$
\fr{{\rm d}r_+(t)}{{\rm d}t}=\fr{\sqrt{1-t^2}t^2}{F(t)},
$$
the positive characteristics curve of system \eqref{3.6}.

From any point $(\hat{t},\hat{r})$ in the region $P'E'D'$, we draw the positive characteristic curve $r=\hat{r}_+(t)$ up to the boundary $\widehat{P'E'}$ at a unique point $(\bar{t}, \tilde{r}(\bar{t}))$ defined by
$$
\hat{r}_+(t)=\hat{r}+ \int_{\hat{t}}^{t}\fr{\sqrt{1-s^2}s^2}{F(s)}\ {\rm d}s,\ \ (t\geq \hat{t}), \quad \hat{r}_+(\bar{t})=\tilde{r}(\bar{t}).
$$
Note that the point $(\bar{t}, \tilde{r}(\bar{t}))$ is corresponding to the point $(\hat{x}(\tilde{r}(\bar{t})), \varphi(\hat{x}(\tilde{r}(\bar{t}))))$ in the $x$-$y$ plane by the definition of function $\hat{x}(r)$. We now apply \eqref{4.6} to define the values $x(\hat{t}, \hat{r})$ and $y(\hat{t}, \hat{r})$ as follows
\begin{align}\label{4.7}
\begin{array}{l}
x(\hat{t}, \hat{r})=\hat{x}(\tilde{r}(\bar{t})) -\dps\int_{\hat{t}}^{\bar{t}}\fr{F_4(t,\hat{r}_+(t))}{2F(t)U(t,\hat{r}_+(t))}t\ {\rm d}t,\\[10pt]
y(\hat{t}, \hat{r})=\varphi(\hat{x}(\tilde{r}(\bar{t}))) -\dps\int_{\hat{t}}^{\bar{t}}\fr{F_2(t,\hat{r}_+(t))}{2F(t)U(t,\hat{r}_+(t))}t\ {\rm d}t.
\end{array}
\end{align}
According to the arbitrariness of $(\hat{t},\hat{r})$, we obtain by \eqref{4.7} the functions $x=x(t,r)$ and $y=y(t,r)$ defined on the whole region $P'E'D'$.

Standard calculation provides the Jacobian of the mapping $(t,r)\mapsto(x,y)$
$$
j:=\fr{\pa(x,y)}{\pa(t,r)}=-\fr{t}{4F(t)U(t,r)V(t,r)},
$$
from which and \eqref{3.12} we find that $j$ does not vanish for $t>0$ which implies the mapping $(t,r)\mapsto(x,y)$ is a local one-to-one mapping. We claim that this mapping is globally one-to-one, including the line $t=0$. To show this, we only need to check the strict monotonicity of $\theta$ along curve $(1-\varpi)=\eps\geq0$. Using \eqref{4.2}, we compute
\begin{align}\label{4.7a}
 & \ \ \  (\theta_x, \theta_y)\cdot(\varpi_y, -\varpi_x)\\ &=t\theta_x\omega_y-t\theta_y\omega_x \nonumber   \\ &=\fr{\kappa+1-t^2}{t}[(F_1U+F_2V)(F_3U-F_4V)-(F_3U+F_4V)(F_1U-F_2V)] \nonumber  \\
&=\fr{2UV(\kappa+1-t^2)}{t}[F_2F_3-F_1F_4] \nonumber  \\
&=\fr{2UV(\kappa+1-t^2)}{t}\cdot2t\sqrt{1-t^2}=4\sqrt{1-t^2}(\kappa+1-t^2)UV<0.
\end{align}
The above inequality holds by the facts $U>0$ and $V<0$ by Lemma \ref{lem2}. Therefore, $\theta$ is a strictly decreasing function along each level curve of $(1-\varpi)\geq0$. Hence we have established the global one-to-one property of the mapping $(t,r)\mapsto(x,y)$.

\subsection{Solutions of system \eqref{2.10}}

We now construct the smooth supersonic solution to system \eqref{2.10}. Thanks to the mapping $(t,r)\mapsto(x,y)$ is globally one-to-one, then we can obtain the functions $t=t(x,y)$ and $r=r(x,y)$
and define the functions $(\theta,\varpi)$ by \eqref{3.1}
\begin{align}\label{4.8}
\theta=\hat{\theta}_1-r(x,y),\quad \varpi=\sqrt{1-t^2(x,y)},\ \ (x,y)\in PED,
\end{align}
where the region $PED$ is bounded with the curves $\widehat{PE}$, $\widehat{PD}$ and $\widehat{DE}$. The curves $\widehat{PD}$ and $\widehat{ED}$ are defined as follows
\begin{align}\label{4.9}
\begin{array}{l}
\widehat{PD}=\{(x,y)|\ \varpi(x,y)=1,\ x\in[x_1,x^*]\},\\
\widehat{DE}=\{(x,y)|\ r(x,y)=\check{r}(t(x,y)),\ x\in[x^*,x_2]\},
\end{array}
\end{align}
where the function $\check{r}(t)$ is given in \eqref{3.11} and the number $x^*=x(0,r^*)\in(x_1,x_2)$ is determined by \eqref{4.7}. We can also get the coordinates of point $D$ as $(x^*, y^*)$ with $y^*=y(0,r^*)$.
It is obvious that the functions $(\theta(x,y),\varpi(x,y))$ defined in \eqref{4.8} satisfy the boundary condition \eqref{2.17} by the construction of $(x(t,r), y(t,r))$ in \eqref{4.7}.

We next discuss the regularity of $(\theta(x,y),\varpi(x,y))$. Recalling \eqref{4.2} and applying \eqref{3.7} give
\begin{align}\label{4.10}
\begin{array}{l}
\dps\theta_x=\fr{F_1(t,r)\overline{V}-F_2(t,r)\overline{U}}{\overline{U}\cdot\overline{V}}, \quad
\theta_y=\fr{F_4(t,r)\overline{U}-F_3(t,r)\overline{V}}{\overline{U}\cdot\overline{V}}, \\[10pt]
\dps\varpi_x=-\fr{k+1-t^2}{\overline{U}\cdot\overline{V}}\bigg( \sin(\hat{\theta}_1-r)(\overline{U}+\overline{V}) +\sqrt{1-t^2}\cos(\hat{\theta}_1-r)\overline{W} \bigg),\\[10pt]
\dps\varpi_y= \fr{k+1-t^2}{\overline{U}\cdot\overline{V}}\bigg(\cos(\hat{\theta}_1-r)(\overline{U}+\overline{V}) -\sqrt{1-t^2}\sin(\hat{\theta}_1-r)\overline{W} \bigg),
\end{array}
\end{align}
from which and Lemmas \ref{lem2} and \ref{lem4}, we acquire that $(\theta_x, \theta_y, \varpi_x, \varpi_y)$ are uniformly bounded,  implying $\theta(x,y)$ and $\varpi(x,y)$ are uniformly Lipschitz continuous. We assert that the functions $(\theta_x, \theta_y, \varpi_x, \varpi_y)$ are actually uniformly $C^{\fr{1}{6}}$-continuous. To prove this assertion, we first establish a lemma.
\begin{lemma}\label{lem6}
Let $f(t,r)$ be a $C^{\fr{1}{3}}$ function defined on the whole region $P'E'D'$. Denote $\tilde{f}(x,y)=f(t(x,y), r(x,y))$. Then the function $\tilde{f}(x,y)$ is uniformly $C^{\fr{1}{6}}$-continuous
in the whole region $PED$.
\end{lemma}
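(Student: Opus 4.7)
The plan is to isolate the square-root nature of the coordinate degeneracy at the sonic line by introducing the auxiliary variable $\tau = t^2$ and showing that the composite map $(\tau, r) \mapsto (x, y)$ is globally bi-Lipschitz on the (rescaled) region $P'E'D'$. Granted this, for preimages $(t_i, r_i) \in P'E'D'$ of any two points $(x_i, y_i) \in PED$ one gets
\[
|t_1^2 - t_2^2| + |r_1 - r_2| \leq C\,|(x_1, y_1) - (x_2, y_2)|,
\]
and combining with $|t_1 - t_2|^2 \leq (t_1+t_2)|t_1-t_2| = |t_1^2 - t_2^2|$ yields $|t_1 - t_2| \leq C|(x_1,y_1) - (x_2, y_2)|^{1/2}$ and $|r_1 - r_2| \leq C|(x_1,y_1)-(x_2,y_2)|$. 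Applying the assumed $C^{1/3}$ continuity of $f$ then gives
\[
|\tilde f(x_1, y_1) - \tilde f(x_2, y_2)| \leq C\bigl(|t_1-t_2|^{1/3} + |r_1-r_2|^{1/3}\bigr) \leq C'|(x_1,y_1) - (x_2, y_2)|^{1/6},
\]
which is the desired uniform $C^{1/6}$ estimate.

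For the bi-Lipschitzness of $(\tau, r) \mapsto (x, y)$, I would rewrite the formulas \eqref{4.4}--\eqref{4.5} via $U = 1/\overline U$, $V = -1/\overline V$, using the elementary identities $F_3 + F_4 = 2t\cos(\hat\theta_1 - r)$ and $F_3 - F_4 = 2\sqrt{1-t^2}\sin(\hat\theta_1-r)$. The expected outcome is: $x_t$ and $y_t$ are $O(t)$, so $x_\tau = x_t/(2t)$ and $y_\tau = y_t/(2t)$ are uniformly bounded; and in $x_r, y_r$ the factor $\overline V - \overline U$ in the numerator cancels the $1/t$ in the denominator through the identity $(\overline U - \overline V)/t = \overline W$ (bounded by Lemma \ref{lem4}), leaving uniformly bounded expressions in $\overline U + \overline V$ and $\overline W$ (whose bounds come from Lemmas \ref{lem2} and \ref{lem4}). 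The Jacobian of the rescaled map is $j/(2t) = -1/(8F(t)UV)$, which is uniformly bounded away from zero by the same lemmas. Combined with the global injectivity of $(t,r)\mapsto(x,y)$ established at the end of Subsection 4.1, this yields the bi-Lipschitz property.

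The main obstacle is the cancellation check in the preceding paragraph: the apparent $1/t$ in $x_r, y_r$ must cancel precisely against $\overline V - \overline U$, and this is where Lemma \ref{lem4} (the sonic identity $\overline U = \overline V$ on $\{t=0\}$, quantified by the boundedness of $\overline W$) enters decisively. Once this cancellation is executed and the Jacobian lower bound is recorded, everything else is a routine change-of-variables argument, and the loss from $1/3$ to $1/6$ in the Hölder exponent is precisely the price one pays for the square-root degeneracy of the coordinate map up to the sonic line.
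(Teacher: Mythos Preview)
Your proposal is correct and rests on the same two ingredients as the paper: the elementary inequality $|t_1-t_2|^2\le|t_1^2-t_2^2|$ that converts the square-root degeneracy into a halving of the H\"older exponent, and the boundedness of $\overline W=(\overline U-\overline V)/t$ from Lemma~\ref{lem4} that removes the apparent $1/t$ singularity. The only difference is organizational. You argue the Lipschitz control on $(\tau,r)=(t^2,r)$ in terms of $(x,y)$ by establishing that the forward map $(\tau,r)\mapsto(x,y)$ is bi-Lipschitz (bounded first derivatives, Jacobian $-1/(8FUV)$ bounded away from zero, global injectivity). The paper instead notes that $r=\hat\theta_1-\theta(x,y)$ and $t^2=1-\varpi^2(x,y)$, and simply quotes the uniform boundedness of $\theta_x,\theta_y,\varpi_x,\varpi_y$ already recorded in \eqref{4.10}; this gives $|r_1-r_2|\le M|(x_1,y_1)-(x_2,y_2)|$ and $|t_1^2-t_2^2|\le M|(x_1,y_1)-(x_2,y_2)|$ in one line, bypassing any bi-Lipschitz or inverse-function-theorem argument. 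Your route works, but it re-derives from the forward side a bound the paper already has on the inverse side; the paper's version is shorter because only the inverse direction of the bi-Lipschitz claim is actually needed.
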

\begin{proof}
Let $(x',y')$ and $(x'',y'')$ be any two points in $PED$, and let $(t',r')$ and $(t'',r'')$ be two points in $P'E'D'$ such that $t'=t(x',y'), r'=r(x',y')$ and $t''=t(x'',y''), r''=r(x'',y'')$. We calculate
\begin{align}\label{4.11}
&|\tilde{f}(x'',y'')-\tilde{f}(x',y')|=|f(t'',r'')-f(t',r')|\nonumber \\
\leq &M |(t'',r'')-(t',r')|^{\fr{1}{3}} = M\bigg( |t''-t'|^2 +|r''-r'|^2\bigg)^{\fr{1}{6}}
\end{align}
for some uniformly constant $M>0$. For the term $|r''-r'|$, one obtains
\begin{align}\label{4.12}
|r''-r'|=|\theta(x'',y'')-\theta(x',y')|\leq M|(x'',y'')-(x',y')|,
\end{align}
by the uniform Lipschitz continuity of $\theta(x,y)$. For the term $|t''-t'|^2$ we have
\begin{align}\label{4.13}
|t''-t'|^2\leq&|t''-t'|\cdot|t''+t'| =|{t''}^2-{t'}^2| \nonumber \\
=&|(1-\varpi^2(x'',y'')) -(1-\varpi^2(x',y'))| \nonumber \\
=& |\varpi^2(x'',y'') -\varpi^2(x',y')| \leq 2|\varpi(x'',y'') -\varpi(x',y')| \nonumber \\
\leq& M|(x'',y'')-(x',y')|,
\end{align}
in view of  the uniform Lipschitz continuity of $\varpi(x,y)$. Putting \eqref{4.12} and \eqref{4.13} into \eqref{4.11} leads to
\begin{align}\label{4.14}
|\tilde{f}(x'',y'')-\tilde{f}(x',y')|\leq M|(x'',y'')-(x',y')|^{\fr{1}{6}}
\end{align}
for some suitable uniformly constant $M>0$, which ends the proof of the lemma.
\end{proof}
In view of Lemmas \ref{lem5} and \ref{lem6}, the above assertion is valid and then the functions $(\theta(x,y), \varpi(x,y))$ are uniformly $C^{1,\fr{1}{6}}$-continuous up to the sonic curve $\widehat{PD}$.

Moreover, we apply \eqref{4.10} again to find that
$$
(\varpi_x)^2+(\varpi_y)^2 =\fr{(\kappa+1-t^2)^2}{\overline{U}^2\overline{V}^2}[(\overline{U}+\overline{V})^2+(1-t^2)\overline{W}^2],
$$
which,  together with Lemmas \ref{lem2} and \ref{lem4}, gives,
$$
0<\widetilde{m}\leq(\varpi_x)^2+(\varpi_y)^2\leq \widetilde{M}<\infty,
$$
for some constants $\widetilde{m}$ and $\widetilde{M}$, meaning that the curve $\varpi(x,y)=\eps\geq0$ is $C^{1}$-continuous. Furthermore, due to Lemmas \ref{lem5} and \ref{lem6}, the level curve $\varpi(x,y)$ and then the sonic curve $\widehat{PD}$ are actually $C^{1,\fr{1}{6}}$-continuous. The monotonicity of $\theta$ along $\widehat{PD}$ follows immediately from \eqref{4.7a}.

We now show that the curve $\widehat{DE}$ defined in \eqref{4.9} is a negative characteristic. Since $\widehat{D'E'}$ is a positive characteristic curve, then it suffices to show that the mapping $(t,r)\mapsto(x,y)$ defined in \eqref{4.7} transforms a positive characteristic curve in $t$-$r$ plane into a negative one in $x$-$y$ plane. To prove it, we differentiate the equality $r(x,y)=r_+(t(x,y))$ with respect to $x$ and use the fact $r_+'(t)=\lambda$ to get
\begin{align}\label{4.16}
\fr{{\rm d}y}{{\rm d}x}=-\fr{r_x-\lambda t_x}{r_y-\lambda t_y} =-\fr{\lambda\sqrt{1-t^2}\omega_x-\theta_x}{\lambda\sqrt{1-t^2}\omega_y-\theta_y}.
\end{align}
Inserting \eqref{4.2} into \eqref{4.16} and employing \eqref{4.8} arrives at
\begin{align*}
\fr{{\rm d}y}{{\rm d}x}&=-\fr{-\lambda\sqrt{1-t^2}\fr{\kappa+1-t^2}{t^2}(F_1U-F_2V)-(F_1U+F_2V)} {\lambda\sqrt{1-t^2}\fr{\kappa+1-t^2}{t^2}(F_3U-F_4V)-(-F_3U-F_4V)} \\
&=\fr{F_1}{F_3}=\fr{\cos\omega\sin\theta-\sin\omega\cos\theta}{\cos\omega\cos\theta+\sin\omega\sin\theta} =\fr{\sin\beta}{\cos\beta}=\Lambda_-.
\end{align*}
Thus the curves defined by the equality $r(x,y)=r_+(t(x,y))$ are negative negative characteristics in $x$-$y$ plane. It is not difficult to verify that $\theta$ is a monotone decreasing function along curve $\widehat{DE}$.

Finally, we check that the functions $\theta(x,y)$ and $\varpi(x,y)$ defined in \eqref{4.8} satisfy system \eqref{2.10}. By performing a direct calculation, we see by \eqref{4.10} that
\begin{align}\label{4.17}
\bar\pa^+\theta &= \cos\alpha \theta_x+\sin\alpha \theta_y \nonumber \\
&=\fr{1}{\overline{U}\cdot\overline{V}}\bigg\{\cos\alpha(F_1\overline{V}-F_2\overline{U}) +\sin\alpha(F_4\overline{U}-F_3\overline{V})\bigg\} =-\fr{2\cos\omega\varpi}{\overline{U}},
\end{align}
and
\begin{align}\label{4.18}
\bar\pa^+\varpi&=\cos\alpha \varpi_x+\sin\alpha \varpi_y \nonumber \\
&=\fr{k+1-t^2}{\overline{U}\cdot\overline{V}}\bigg\{ \sin\alpha[\cos(\hat{\theta}_1-r)(\overline{U}+\overline{V}) -\sqrt{1-t^2}\sin(\hat{\theta}_1-r)\overline{W}] \nonumber \\
&\qquad \qquad \qquad \quad   -\cos\alpha[\sin(\hat{\theta}_1-r)(\overline{U}+\overline{V}) +\sqrt{1-t^2}\cos(\hat{\theta}_1-r)\overline{W}]  \bigg\}  \nonumber \\
&=\fr{2(\kappa+\varpi^2)\varpi}{\overline{U}}.
\end{align}
Combining with \eqref{4.17} and \eqref{4.18} yields
\begin{align*}
\bar\pa^+\theta +\fr{\cos\omega}{\kappa+\varpi^2}\bar\pa^+\varpi =-\fr{2\cos\omega\varpi}{\overline{U}} +\fr{\cos\omega}{\kappa+\varpi^2}\cdot \fr{2(\kappa+\varpi^2)\varpi}{\overline{U}}=0,
\end{align*}
which means that the first equation of \eqref{2.10} holds. The second equation of \eqref{2.10}
can be checked analogously. The proof of Theorem \ref{thm1} is completed.

\section*{Acknowledgements}

The authors would like to thank the referee for very helpful comments and suggestions to improve the quality of the paper.


\medskip
Received xxxx 20xx; revised xxxx 20xx.
\medskip

\end{document}